  	\newtheorem{theorem}{Theorem}[section]
   \newtheorem{theorem}{Theorem}
\newtheorem{lemma}[theorem]{Lemma}
\newtheorem{prop}[theorem]{Proposition}
\newtheorem{obs}[theorem]{Observation}
\newtheorem{cor}[theorem]{Corollary}
  \newtheorem{dnt}[theorem]{Definition}
\newtheorem{exm}[theorem]{Example}
\numberwithin{equation}{section}
\title{Word-Representable Co-Bipartite Graphs: Vertex Ordering, Representation Number, Speed, and Entropy}
\author{\hspace{1cm} Biswajit Das \ and Ramesh Hariharasubramanian \\ 
{{\footnotesize d.biswajit@iitg.ac.in},\ {\footnotesize  ramesh\_h@iitg.ac.in}}\\{\footnotesize Department of Mathematics, Indian Institute of Technology Guwahati, Guwahati, Assam 781039, India}}
\begin{document}
	\maketitle
	
	\begin{abstract}
A graph $G(V, E)$ is \textit{word-representable} if there exists a word $w$ over the alphabet $V$ such that for distinct letters $x,y\in V$, $x$ and $y$ alternate in $w$ if and only if they are adjacent in $G$. In general, determining whether a graph is word-representable is an NP-complete problem.  A graph is \textit{co-bipartite} if its complement is bipartite. Therefore, the vertex set of a co-bipartite graph can be partitioned into two disjoint subsets $X$ and $Y$ such that the subgraphs induced by $X$ and $Y$ are cliques.  
 
In this paper, we obtain necessary and sufficient conditions for a co-bipartite graph to be word-representable in terms of a vertex ordering. Based on this ordering, we study the representation number of word-representable co-bipartite graphs and analyse the speed and entropy of this graph class. We show that the representation number of any word-representable co-bipartite graph is at most $3$, and that permutation graphs are the only co-bipartite graphs with representation number $2$. We prove that the speed is $2^{O(n \log n)}$ and the entropy is $0$. This provides an asymptotic bound on the number of labelled graphs in this class, which is significantly smaller than the known bound for the class of all co-bipartite graphs. These results provide a better understanding of the structure and enumeration of word-representable co-bipartite graphs and show that vertex ordering is an effective tool for studying this class. \\
    \textbf{Keywords:} Word-representable graph, Co-bipartite graph, Vertex ordering, Representation number, Speed.
	\end{abstract}

\section{Introduction}
The concept of word-representable graphs was introduced by Sergey Kitaev in connection with the study of the Perkins semigroup~\cite{kitaev2008word}. Since then, this notion has been studied widely and has found connections to algebra, graph theory, computer science, and combinatorics on words. Word-representable graphs generalise several well-known graph classes, including circle graphs, comparability graphs, and all $3$-colourable graphs. These graphs also arise in scheduling problems with repeating tasks, where certain pairs of tasks are required to alternate over time, and these alternation requirements naturally form an undirected graph.  A general overview of the background and motivation for studying word-representable graphs is given in~\cite{kitaev2015words}.

Word-representability has been studied for several classes of graphs. In \cite{kitaev2008representable}, outerplanar graphs and prism graphs were shown to be word-representable, while odd wheel graphs were proved to be non-word-representable. The word-representability of crown graphs and $k$-cube graphs was investigated in~\cite{glen2018representation} and~\cite{broere24k}, respectively. The word-representability of split graphs was studied in~\cite{iamthong2022word}, \cite{kitaev2021word}, \cite{iamthong2023semi}, and~\cite{chen2022representing}. In particular, \cite{kitaev2024semi} studied the word-representability of split graphs using vertex labelling and also provided a polynomial-time recognition algorithm for word-representable split graphs. More recently, the authors of~\cite{chen2025word} studied the $K_m-K_n$ graph (co-bipartite graphs) and identified forbidden induced subgraphs for $K_m-K_3$ and $K_m-K_4$.

The class of co-bipartite graphs contains both word-representable and non-word-representable graphs. In particular, the graphs $G_1$ and $G_2$ are minimal, with respect to the number of vertices, non-word-representable co-bipartite graphs, as shown in Figure~\ref{nonRepTri1}.

\begin{figure}[H]
\begin{center}
\begin{tabular}{ccccc}
\begin{tikzpicture}[
scale=.8,
transform shape,
node distance=0.7cm,
auto,
main node/.style={fill,circle,draw,inner sep=0pt,minimum size=4pt}
]
\node [main node](1) {};
\node (2) [ below left of=1] {};
\node (3) [ below right of=1] {};
\node [main node](4) [below of =1]{};
\node [main node](5) [ left of=1, xshift=3mm,yshift=2.5mm] {};
\node [main node](6) [ right of=1, xshift=-3mm,yshift=2.5mm] {};
\node (7) [ above of=1] {};
\node [main node](8) [ above of=7] {};
\node [main node](9) [ below left of=2, xshift=-5mm] {};
\node [main node](10) [ below right of=3,xshift=5mm] {};
\node (11) [left of=5, xshift=-5mm] {$G_1=$};

\draw (1) -- (4);
\draw (1) -- (5);
\draw (1) -- (6);
\draw (1) -- (8);
\draw (1) -- (9);
\draw (1) -- (10);
\draw (4) -- (5);
\draw (4) -- (6);
\draw (5) -- (6);
\draw (4) -- (10);
\draw (5) -- (9);
\draw (6) -- (8);
\draw (8) -- (9);
\draw (8) -- (10);
\draw (9) -- (10);

\end{tikzpicture}

& 

\ \ \ \ \ \ \ \ \ \ \ \ \ \ \

&

\begin{tikzpicture}[
scale=0.8,
transform shape,
node distance=0.6cm,
auto,
main node/.style={fill,circle,draw,inner sep=0pt,minimum size=4pt}
]
\node [main node](1) {};
\node (2) [ below left of=1] {};
\node [main node](3) [ left of=2] {};
\node (4) [ below right of=1] {};
\node [main node](5) [ right of=4] {};
\node [main node](6) [ below of=1, yshift=-5mm] {};
\node [main node](7) [ below of=6] {};
\node (8) [ left of=7] {};
\node [main node](9) [ below left of=8] {};
\node (10) [ right of=7] {};
\node [main node](11) [ below right of=10] {};
\node (12) [below left of=3, xshift=-5mm] {$G_2=$};

\draw (1) -- (3);
\draw (1) -- (5);
\draw (1) -- (6);
\draw (3) -- (5);
\draw (3) -- (6);
\draw (5) -- (6);
\draw (7) -- (9);
\draw (7) -- (11);
\draw (9) -- (11);
\draw (3) -- (7);
\draw (3) -- (9);
\draw (5) -- (7);
\draw (5) -- (11);
\draw (6) -- (9);
\draw (6) -- (11);

\end{tikzpicture}
\end{tabular}

\caption{\label{nonRepTri1} The minimal (by the number of vertices) non-word-representable co-bipartite graphs $G_1$\cite{kitaev2015words} and $G_2$ \cite{kitaev2024human}.}
\end{center}
\end{figure}
A graph is said to be $k$-word-representable if there exists a $k$-uniform word representing it. According to Corollary~\ref{npc} and Proposition~\ref{knpc}, determining whether a graph is word-representable, as well as determining whether a graph is $k$-word-representable, are both NP-complete problems. From a $k$-word-representation of a word-representable graph, one can derive an upper bound on the representation number of the graph. Therefore, it is important to study the word-representability of specific graph classes and to obtain polynomial-time bounds on their representation numbers.

Split graphs and co-bipartite graphs play an important role in the study of the speed of hereditary graph classes and their asymptotic structure, and these aspects have been extensively studied in the literature. These concepts are also closely related to the asymptotic enumeration of word-representable graphs, as discussed in~\cite{collins2017new}. In general, for every infinite hereditary class $X$ of simple graphs, the following holds:
\[
\lim_{n\to\infty}\frac{\log_2 X_n}{\binom{n}{2}} = 1 - \frac{1}{k(X)}
\]
, where $k(X)$ is called the index of $X$. This index is defined using the graph classes ${\cal E}_{i,j}$, consisting of graphs whose vertices can be partitioned into at most $i$ independent sets and $j$ cliques. In particular, bipartite graphs, split graphs, and co-bipartite graphs correspond to the classes ${\cal E}_{2,0}$, ${\cal E}_{1,1}$, and ${\cal E}_{0,2}$, respectively. The index $k(X)$ is the maximum integer $k$ such that $X$ contains the class ${\mathcal E}_{i,j}$ for some integers $i$ and $j$ satisfying $i + j = k$. This result was independently established by Alekseev~\cite{alekseev1992range} and by Bollob\'as and Thomason~\cite{bollobas1995projections,bollobas1997hereditary}, and is now known as the Alekseev--Bollob\'as--Thomason Theorem (see also~\cite{alon2011structure}).

Since co-bipartite graphs are complements of bipartite graphs, they can be recognised in polynomial time. Furthermore, several computational problems that are difficult for general graphs admit polynomial-time algorithms when restricted to co-bipartite graphs. For example, Brandst\"adt et al.~\cite{brandstadt1998complexity} gave a polynomial-time algorithm to determine whether a graph contains a stable set whose removal results in a co-bipartite graph. Similarly, the $cd$-coloring problem can be solved in polynomial time for co-bipartite graphs~\cite{shalu2020complexity}. However, not all computational problems become easier on this class. Bodlaender and Jansen~\cite{bodlaender2000complexity} showed that the SIMPLE MAX CUT problem remains NP-complete for co-bipartite graphs. Moreover, the class of co-bipartite graphs has unbounded clique-width and contains at least $2^{n^2/4}$ distinct graphs on $n$ vertices~\cite{boliac2002clique}.

The study of word-representability of co-bipartite graphs is natural in the context of graph classes defined by vertex partitions. Bipartite graphs, where the vertex set can be partitioned into two independent sets, and split graphs, where the vertex set can be partitioned into one clique and one independent set, have already been studied in the context of word-representability. Co-bipartite graphs, where the vertex set can be partitioned into two cliques, form the complementary class of bipartite graphs. Therefore, it is natural to study the word-representability of co-bipartite graphs.

In this paper, we study word-representable co-bipartite graphs using vertex ordering. Vertex ordering provides a natural structural framework to describe the neighbourhood relationships between vertices. Using this ordering, we analyse the representation number, speed, and entropy of the word-representable co-bipartite graph. Moreover, this ordering enables the construction of word-representations and helps characterise the possible neighbourhood structures. Thus, vertex ordering serves as an effective tool for studying the structural and combinatorial properties of word-representable co-bipartite graphs.

The main goal of this paper is to study the word-representability of co-bipartite graphs via vertex ordering. In Section~\ref{sc2}, we introduce the necessary definitions and preliminaries related to word-representable graphs. In Section~\ref{sc3}, we study co-bipartite graphs based on a vertex ordering and derive a necessary and sufficient condition for a co-bipartite graph to be word-representable in terms of a specific ordering of the vertices in one of the cliques. In Section~\ref{sc4}, based on this ordering, we present an algorithm that constructs a $3$-uniform word-representation for any word-representable co-bipartite graph. As a consequence, we show that the representation number of any word-representable co-bipartite graph is at most~$3$. Moreover, we prove that the representation number of a co-bipartite graph is equal to~$3$ if and only if the graph is not a permutation graph. In Section \ref{sc5}, we study the speed and entropy of the class of word-representable co-bipartite graphs, and show that its speed is $2^{O(n \log n)}$ and its entropy is $0$.
\section{Preliminaries}\label{sc2}
In this section, we briefly describe the necessary definitions, notation, and basic results on word-representable graphs.

 A simple graph $G(V, E)$ is an undirected graph with a vertex set $V$ and an edge set $E$. For a vertex $v \in V$, $N_v$ denotes the set of vertices in $V$ that are adjacent (neighbours) to $v$. The \textit{degree} of a vertex $v$ is defined as the cardinality of $N_v$. A subset $S \subseteq V$ is called an \textit{independent set} if no two distinct vertices $u, v \in S$ are adjacent. A subset $C \subseteq V$ is called a \textit{clique} if every pair of distinct vertices $u, v \in C$ is adjacent. A clique $C$ is said to be \textit{maximal} if there is no vertex $v \in V \setminus C$ such that $C \cup \{v\}$ is also a clique in $G$.
 
A \textit{subword} of a word $w$ is a word obtained by removing certain letters from $w$. In a word $w$, if $x$ and $y$ alternate, then $w$ contains $xyxy\cdots$ or $yxyx\cdots$ (odd or even length) as a subword. 

\begin{dnt}\textit{(\cite{kitaev2015words} , Definition 3.0.5)}.
 A simple graph $G(V, E)$ is \textit{word-representable} if there exists a word $w$ over the alphabet $V$ such that letters $x$ and $y$, $x\neq y$, $x,y\in V$, alternate in $w$ if and only if $x$ and $y$ are adjacent in $G$. If a word $w$ \textit{represents} $G$, then $w$ contains each letter of $V$ at least once.
\end{dnt}

 In a word $w$ that represents a graph $G$, if $x$ and $y$ are not adjacent in $G$, then the non-alternation between $x$ and $y$ occurs if any one of these $xxy$, $yxx$, $xyy$, ${yyx}$ subwords is present in $w$.
\begin{dnt}
    (\textit{\cite{kitaev2015words}, Definition 3.2.1.}) A \textit{$k$-uniform word} is a word $w$ in which every letter occurs exactly $k$ times.
\end{dnt}

\begin{dnt}(\textit{\cite{kitaev2015words}, Definition 3.2.3.}) A graph is \textit{$k$-word-representable} (or \textit{$k$-representable}) if there exists a $k$-uniform word representing it.
\end{dnt}

\begin{dnt} (\textit{\cite{kitaev2017comprehensive}, Definition 3})
    For a word-representable graph $G$, \textit{the representation number} is the least $k$ such that $G$ is $k$-representable.
\end{dnt}
For a circle graph, the representation number is already known.
\begin{theorem}[\cite{kitaev2017comprehensive}, Theorem 6]\label{2unf}
		$\mathcal{R}_2 = \{G: G \text{ is a non-complete circle graph}\}$.
	\end{theorem}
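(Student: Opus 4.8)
The plan is to pin down exactly which graphs have representation number $2$ by separating the two thresholds that bound that number: being $1$-word-representable and being $2$-word-representable. Since $\mathcal{R}_2$ is by definition the class of graphs whose representation number equals $2$, a graph lies in $\mathcal{R}_2$ precisely when it is $2$-word-representable but fails to be $1$-word-representable. I would therefore prove the statement in three steps, first identifying each of these two classes and then subtracting one from the other.

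First I would show that a graph is $1$-word-representable if and only if it is complete. A $1$-uniform word is simply a permutation $p$ of the alphabet $V$; for any two distinct letters $x,y$ the induced subword is $xy$ or $yx$, which is alternating, so $p$ represents the complete graph on $V$. Conversely, if $w$ is a $1$-uniform word then every pair of letters alternates, forcing the represented graph to be complete. Hence a graph is $1$-word-representable exactly when it is complete, which accounts for the word ``non-complete'' in the statement.

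Second, and this is the technical heart of the argument, I would establish the equivalence between $2$-word-representability and being a circle graph. Given a $2$-uniform word $w$ of length $2n$ over an $n$-letter alphabet, place the $2n$ positions of $w$ as points in cyclic order on a circle and join the two occurrences of each letter by a chord. Two chords associated to letters $x$ and $y$ cross if and only if their four endpoints interleave around the circle, i.e.\ appear in the cyclic order $x\,y\,x\,y$; this is exactly the condition that $x$ and $y$ alternate in $w$. Consequently the graph represented by $w$ is the circle graph of this chord diagram, so every $2$-word-representable graph is a circle graph. For the reverse inclusion I would take any chord diagram realising a given circle graph (with endpoints perturbed into general position so that they are distinct) and read its endpoints around the circle to produce a $2$-uniform word; by the same crossing-versus-alternation equivalence this word represents the circle graph, so every circle graph is $2$-word-representable.

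Finally I would combine the two steps: a graph $G$ has representation number exactly $2$ iff it is $2$-word-representable but not $1$-word-representable, iff it is a circle graph but not complete. This yields $\mathcal{R}_2=\{G: G\text{ is a non-complete circle graph}\}$. The main obstacle is the second step, specifically the careful verification of the ``chords cross $\iff$ letters alternate'' dictionary and the check that reading the endpoints of an arbitrary chord diagram really returns a $2$-uniform word representing the intended circle graph; the degenerate configurations (coincident endpoints, isolated chords) require the standard general-position perturbation but introduce no genuinely new difficulty.
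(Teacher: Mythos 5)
This theorem is quoted from \cite{kitaev2017comprehensive} and the paper gives no proof of it, so there is nothing internal to compare against; your argument is the standard proof of that cited result and it is correct. The decomposition into ``$1$-representable iff complete'' plus ``$2$-representable iff circle graph'' (via the chord-diagram dictionary, where cyclic interleaving of the two occurrences of $x$ and $y$ coincides with linear alternation precisely because each letter occurs exactly twice) is exactly how the result is established in the literature, and the subtraction step correctly uses that every complete graph is a circle graph and is $1$-representable.
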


\begin{prop}(\textit{\cite{kitaev2015words}, Proposition 3.0.15.})\label{pr2}
   Let $w = w_1xw_2xw_3$ be a word representing a graph $G$,
where $w_1$, $w_2$ and $w_3$ are possibly empty words, and $w_2$ contains no $x$. Let $X$ be the set of all letters that appear only once in $w_2$. Then, possible candidates for $x$ to be adjacent in $G$ are the letters in $X$. 
\end{prop}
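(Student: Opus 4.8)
The plan is to argue directly from the definition of word-representability, using the fact that $x$ and $y$ are adjacent in $G$ if and only if they alternate in $w$. The crucial feature of the decomposition $w = w_1 x w_2 x w_3$ is that the two displayed occurrences of $x$ are \emph{consecutive} occurrences of the letter $x$, precisely because $w_2$ contains no $x$. I would therefore fix an arbitrary letter $y \neq x$ and analyse the pattern formed by $x$ and $y$ between these two consecutive copies of $x$, i.e. inside the block $w_2$.

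First I would restate the alternation of $x$ and $y$ in a form that isolates the relevant block: $x$ and $y$ alternate in $w$ if and only if the projection of $w$ onto the two-letter alphabet $\{x,y\}$ contains neither the factor $xx$ nor the factor $yy$; equivalently, exactly one occurrence of $y$ lies strictly between every pair of consecutive occurrences of $x$. Applying this to the pair of consecutive $x$'s that surround $w_2$, alternation forces $w_2$ to contain exactly one copy of $y$.

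The core of the argument is then a short case analysis on the number of times $y$ occurs in $w_2$. If $y$ does not occur in $w_2$, then the two consecutive $x$'s become adjacent in the projection onto $\{x,y\}$, producing the factor $xx$ and destroying alternation. If $y$ occurs two or more times in $w_2$, then since $w_2$ has no $x$, two of these $y$'s are consecutive in the projection, producing the factor $yy$ and again destroying alternation. Hence alternation of $x$ and $y$ is possible only when $y$ occurs exactly once in $w_2$, i.e. only when $y \in X$.

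Taking the contrapositive yields the claim: if $y \notin X$ then $x$ and $y$ do not alternate, so $xy \notin E(G)$; equivalently, every neighbour of $x$ must lie in $X$, so $X$ is the set of candidate neighbours. I do not anticipate a genuine obstacle, since the entire content is the observation that a single fixed pair of consecutive occurrences of $x$ already constrains $y$. The one point requiring care is to phrase the alternation condition purely in terms of the block $w_2$, so that occurrences of $y$ elsewhere in $w_1$ or $w_3$ are correctly seen to be irrelevant to ruling out adjacency with this pair of $x$'s.
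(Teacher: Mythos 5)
Your argument is correct and is exactly the standard proof of this fact: since $w_2$ contains no $x$, the two displayed occurrences of $x$ are consecutive, and alternation of $x$ with $y$ forces exactly one $y$ between them, so any neighbour of $x$ must lie in $X$. The paper cites this proposition from Kitaev and Lozin without reproving it, and your direct case analysis on the number of occurrences of $y$ in $w_2$ matches the intended argument.
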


 The \textit{initial permutation} of $w$ is the permutation obtained by removing all but the leftmost occurrence of each letter $x$ in $w$, and it is denoted by $\pi(w)$. Similarly, the \textit{final permutation} of $w$ is the permutation obtained by removing all but the rightmost occurrence of each letter $x$ in $w$, and it is denoted $\sigma(w)$. For a word $w$, $w_{\{x_1, \cdots, x_m\}}$ denotes the word after removing all letters except the letters $x_1, \ldots, x_m$ present in $w$. 
  \begin{exm}
      $w = 6345123215$, we have $\pi(w) = 634512$, $\sigma(w) = 643215$ and $w_{\{6,5\}} = 655$.
   \end{exm}

There exists a connection between graph orientations and word-representability. In the following, the definition of a semi-transitive orientation and its connection to word-representability are described.

Semi-transitive orientation is defined based on shortcuts in the papers \cite{halldorsson2011alternation} and \cite{halldorsson2016semi}.
 A semi-cycle is the directed acyclic graph obtained by reversing the
direction of one edge of a directed cycle \cite{kitaev2015words}. An acyclic digraph is a {\em shortcut} if it is induced by the vertices of a semi-cycle and contains a pair of non-adjacent vertices. Thus, any shortcut  

\begin{itemize}

\item is {\em acyclic} (that is, there are {\em no directed cycles});

\item has {\em at least} 4 vertices;

\item has {\em exactly one} source (the vertex with no edges coming in), {\em exactly one} sink (the vertex with no edges coming out), and a {\em directed path} from the source to the sink that goes through {\em every} vertex in the graph;

\item has an edge connecting the source to the sink that we refer to as the {\em shortcutting edge};

\item is {\em not} transitive (that is, there exist vertices $u$, $v$ and $z$ such that $u\rightarrow v$ and $v\rightarrow z$ are edges, but there is {\em no} edge $u\rightarrow z$).

\end{itemize}

\begin{dnt}(\cite{halldorsson2016semi}, Definition 1) An orientation of a graph is {\em semi-transitive} if it is {\em acyclic} and 
{\em shortcut-free}.\end{dnt}

From these definitions, it is clear that {\em any} transitive orientation is necessarily semi-transitive. The converse is {\em not} true. Thus, semi-transitive orientations generalise transitive orientations.
A key result in the theory of word-representable graphs is the following theorem. 

\begin{theorem}(\cite{halldorsson2016semi}, Theorem 3)\label{key-thm} A graph is word-representable if and only if it admits a semi-transitive orientation. Moreover, each non-complete word-representable graph is $2(n-\kappa)$-word-representable where $\kappa$ is the size of the maximum clique in~$G$.
		\label{thm:rep-equals-semi-trans}
	\end{theorem}
	
    The recognition problem of word-representable graphs and deciding the representation number of word-representable graphs are $NP$-complete problems.
	\begin{cor}(\cite{halldorsson2016semi}, Corollary 2)\label{npc}
		The recognition problem for word-representable graphs is NP-complete.
	\end{cor}
    \begin{prop}(\cite{halldorsson2016semi},  Proposition 8.)\label{knpc}
        Deciding whether a given graph is a $k$-word-representable graph, for any given $3 \leq k \leq \lceil n/2\rceil$, is NP-complete. 
    \end{prop}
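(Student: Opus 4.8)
The plan is to establish the two halves of $NP$-completeness separately: membership in $NP$, and $NP$-hardness, with the bulk of the work in the latter.

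For membership, I would take a $k$-uniform word $w$ over $V(G)$ as the certificate. Since $k\le\lceil n/2\rceil$, its length $k\,|V(G)|$ is polynomial in $n$, and verification is routine: for every unordered pair $\{x,y\}$ one extracts the subword $w_{\{x,y\}}$ and checks that $x$ and $y$ alternate in $w$ if and only if $xy\in E(G)$. This runs in polynomial time, so the problem lies in $NP$.

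For hardness I would first record a monotonicity fact that makes the target quantity easy to speak about. If $w$ is a $k$-uniform word representing $G$, then by Observation~\ref{pw} the word $\pi(w)\,w$ also represents $G$; as $\pi(w)$ uses each letter exactly once, $\pi(w)\,w$ is $(k{+}1)$-uniform. Hence $k$-word-representability implies $(k{+}1)$-word-representability, so the set of admissible uniformities is upward closed and ``$G$ is $k$-word-representable'' is equivalent to $R(G)\le k$, where $R(G)$ denotes the representation number. Dually, restricting any $k$-uniform representing word to the letters of an induced subgraph yields a $k$-uniform representation of that subgraph, so $R$ is monotone under taking induced subgraphs. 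With this language fixed, the top of the range is immediate: invoking the known upper bound $R(G)\le\lceil n/2\rceil$ valid for every word-representable graph on $n$ vertices, a graph is $\lceil n/2\rceil$-word-representable exactly when it is word-representable, so that instance of the problem coincides with recognition and is $NP$-complete by Corollary~\ref{npc}.

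It remains to push the hardness down to an arbitrary target $k$ in the range $3\le k\le\lceil n/2\rceil$. Here I would use a padding reduction: given a recognition instance $G_0$, form $G'=G_0\sqcup H_k$, the disjoint union with a gadget $H_k$ of prescribed representation number (crown graphs are natural candidates, their representation numbers being known and spanning the required values). Concatenating padded representations and using induced-subgraph monotonicity gives $R(G')=\max\{R(G_0),R(H_k)\}$ for uniformity at least two, so $G'$ is $k$-word-representable precisely when $R(G_0)\le k$; choosing $|V(H_k)|$ also seats $k$ correctly inside $[3,\lceil n'/2\rceil]$ for $n'=|V(G')|$. The hard part will be calibrating the reduction for small fixed $k$ (most acutely $k=3$, the boundary below which $1$- and $2$-uniform words only capture complete graphs and circle graphs): one must manufacture hardness instances whose representation number, when finite, is guaranteed to sit at the threshold $k$, i.e.\ simultaneously control $R$ from above on the yes-instances and preserve $NP$-hardness of the underlying recognition problem (or of a direct reduction from, say, $3$-SAT or graph colouring). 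Designing such threshold-controlled gadgets is the genuinely delicate step, while the remaining pieces are bookkeeping built on the monotonicity and upper-bound facts above.
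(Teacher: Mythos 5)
This proposition is quoted by the paper from \cite{halldorsson2016semi} without proof, so there is no in-paper argument to compare against; your proposal has to stand on its own. The $NP$-membership half is fine (a $k$-uniform certificate of length $kn\le \lceil n/2\rceil\, n$, checked pairwise in polynomial time), and the structural observations you build on are all correct: upward closure of $k$-representability via $\pi(w)w$, monotonicity of the representation number under induced subgraphs, and $R(G_1\sqcup G_2)=\max\{R(G_1),R(G_2)\}$ for uniformity at least $2$ (concatenation kills all cross-alternations once each letter occurs at least twice).

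The genuine gap is the one you yourself flag: the hardness argument for a fixed small $k$, most importantly $k=3$, is not carried out. Your padding reduction shows that $G'=G_0\sqcup H_k$ is $k$-word-representable iff $R(G_0)\le k$, but $NP$-hardness of recognition (Corollary~\ref{npc}) only gives hardness of deciding $R(G_0)<\infty$; these coincide only if the hard instances of recognition are guaranteed to satisfy $R(G_0)\le k$ whenever they are word-representable at all. Establishing that the recognition-hard family has representation number at most $3$ on its yes-instances is precisely the content of the original proof in \cite{halldorsson2016semi} (their reduction produces graphs that are word-representable iff $3$-word-representable), and it is the load-bearing step here --- without it, the ``$\lceil n/2\rceil$'' endpoint is the only value of $k$ for which your argument closes, and even that endpoint is a different problem from ``$k$-representability for a given fixed $k$'' since there $k$ grows with $n$. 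So the skeleton is right, but the theorem is not proved until the threshold-controlled instances are actually exhibited rather than postulated.
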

The following characterisation of word-representable split graphs is derived using vertex labelling. In what follows, for any two integers $a \le b$, we denote the set of integers $\{a, a+1, \ldots, b\}$ by $[a, b]$.

\begin{theorem}[\cite{kitaev2024semi}]\label{Word_split_graph}
	Let $G (I \cup C, E)$ be a split graph. Then, $G$ is word-representable if and only if the vertices of $C$ can be labelled from $1$ to $k = |C|$ in such a way that for each $a, b \in I$ the following holds.
	\begin{enumerate}
		\item  Either $N(a) = [1, m] \cup [n, k]$, for $m < n$, or $N(a) = [l, r]$, for $l \le r$.
		\item  If $N(a) = [1, m] \cup [n, k]$ and $N(b) = [l, r]$, for $m < n$ and $l \le r$, then $l > m$ or $r < n$.
		\item If $N(a) = [1, m] \cup [n, k]$ and $N(b) = [1, m'] \cup [n', k]$, for $m < n$ and $m' < n'$, then $m' < n$ and $m < n'$.
	\end{enumerate}
\end{theorem}
A word $u$ contains a word $v$ as a \textit{factor} if $u = xvy$ where $x$ and $y$ can be empty words. In this paper, $w=w_1w_2\cdots w_n$ denotes the word $w$ contains $\{w_1,w_2,\ldots, w_n\}$ as factors where $w_i$ is a word possibly empty. In a graph $G(V,E)$, we denote the edges between two vertices $x$ and $y$ as $x \sim y$. If there is no edge between them, we denote this as $x \nsim y$.
\noindent 

\section{Necessary and sufficient condition in vertex ordering}\label{sc3}
The word-representability of co-bipartite graphs is studied in \cite{das2025word}, where necessary and sufficient conditions for semi-transitive orientations of co-bipartite graphs are established. The known results concerning the semi-transitive orientation of co-bipartite graphs are as follows.

\begin{obs}(\textit{\cite{das2025word}})\label{obs41}
    Any semi-transitive orientation of $\overline{B}(K_m,K_n)$ subdivides the set of all vertices into three possibly empty groups of the types shown schematically in Figure~\ref{3-groups}. 
\begin{itemize}
\item 
The vertical oriented paths are a schematic way to show (parts of) $\vec{P_1}$ for $V(K_m)$ and $\vec{P_2}$ for $V(K_n)$;
\item For the vertices of $K_m$, the vertical oriented paths in the types $A$ and $B$ represent up to $l$ consecutive vertices in $\vec{P_1}$, $l\leq n$. Similarly, for the vertices of $K_n$, the vertical oriented paths in the types $A$ and $B$ represent up to $l$ consecutive vertices in $\vec{P_2}$, $l\leq m$.

\begin{figure}[H]
\begin{center}

\begin{tabular}{ccc}

\begin{tikzpicture}[
->,
>=stealth',
shorten >=1pt,
scale=0.8,
transform shape,
node distance=0.5cm,
auto,
main node/.style={fill,circle,draw,inner sep=0pt,minimum size=4pt}
]
\node[main node] (1) {};
\node[main node] (2) [right of=1,xshift=5mm] {};
\node[main node] (3) [above of=2] {};
\node (4) [above of=3] {};
\node[main node] (5) [below of=2] {};
\node (6) [below of=5] {};

\node (7) [above of=4,yshift=-4mm,xshift=1mm] {};
\node (8) [above right of=7,xshift=6mm]{};

\node (10) [below of=6,yshift=4mm,xshift=1mm] {};
\node (11) [below right of=10,xshift=6mm]{};

\node (11) [below left of=5,yshift=-5mm]{type $A$};

\path
(4) edge (3)
(3) edge (2)
(2) edge (5)
(5) edge (6);

\path
(1) edge (2)
(1) edge (3)
(1) edge (5);

\end{tikzpicture}

&

\begin{tikzpicture}[
->,
>=stealth',
shorten >=1pt,
scale=1,
transform shape,
node distance=0.5cm,
auto,
main node/.style={fill,circle,draw,inner sep=0pt,minimum size=4pt}
]
\node[main node] (1) {};
\node[main node] (2) [right of=1,xshift=5mm] {};
\node[main node] (3) [above of=2] {};
\node (4) [above of=3] {};
\node[main node] (5) [below of=2] {};
\node (6) [below of=5] {};

\node (7) [above of=4,yshift=-4mm,xshift=1mm] {};
\node (8) [above right of=7,xshift=6mm]{};

\node (10) [below of=6,yshift=4mm,xshift=1mm] {};
\node (11) [below right of=10,xshift=6mm]{};

\node (11) [below left of=5,yshift=-5mm]{type $B$};

\path
(4) edge (3)
(3) edge (2)
(2) edge (5)
(5) edge (6);

\path
(2) edge (1)
(3) edge (1)
(5) edge (1);

\end{tikzpicture}

&

\begin{tikzpicture}[
->,
>=stealth',
shorten >=1pt,
scale=0.8,
transform shape,
node distance=0.5cm,
auto,
main node/.style={fill,circle,draw,inner sep=0pt,minimum size=4pt}
]
\node[main node] (1) {};
\node[main node] (2) [below right of=1,xshift=5mm] {};
\node[main node] (4) [above of=2] {};
\node[main node] (5) [below of=2] {};
\node[main node] (6) [below of=5] {};
\node[main node] (13) [above of=4] {};
\node[main node] (14) [below of=5] {};

\node (7) [above of=13,yshift=-5mm,xshift=1mm] {};
\node (8) [right of=7,xshift=6mm]{};
\node (9) [right of=8]{source};

\node (10) [below of=14,yshift=5mm,xshift=1mm] {};
\node (11) [right of=10,xshift=6mm]{};
\node (12) [right of=11,xshift=-1mm]{sink};

\node [below of=14]{type $C$};

\path
(8) edge (7)
(11) edge (10);

\path
(13) edge (1)
(4) edge (1)
(1) edge (14)
(1) edge (5)
(4) edge (2)
(2) edge (5)
(13) edge (4)
(5) edge (14);
\end{tikzpicture}
\end{tabular}

\caption{\label{3-groups} Three types of vertices in $K_m$ and $K_n$ }
\end{center}
\end{figure}
\item For the vertices of $K_m$, the vertical oriented path in type $C$ contains all $n$ vertices in $\vec{P_1}$. Similarly, for the vertices of $K_n$, the vertical oriented path in type $C$ contains all $m$ vertices in $\vec{P_2}$. These oriented paths $P_1$ and $P_2$ are subdivided into three groups of consecutive vertices, with the middle group possibly containing no vertices; the group of vertices containing the source (resp., sink) is the {\em source-group} (resp., {\em sink-group});
\end{itemize} 
\end{obs}

\begin{lemma}(\textit{\cite{das2025word}})\label{lm41}
    Suppose that the graph $\overline{B}(K_m,K_n)$ is word-representable and $S$ is a semi-transitive orientation of the graph $\overline{B}(K_m,K_n)$. Suppose $\{x,y\}\in V(K_m)$ and $x$ is a type $A$ vertex and $y$ is a type $B$ vertex and the orientation of the edge between $x$ and $y$ is $y\rightarrow x$. Then, $x$ and $y$ do not have any common neighbour among the vertices of $K_n$. 
\end{lemma}

\begin{lemma}(\textit{\cite{das2025word}}) \label{lm42}
    Suppose the graph $\overline{B}(K_m,K_n)$ is word-representable and $S$ is a semi-transitive orientation of the graph $\overline{B}(K_m,K_n)$. Suppose $\{x,y\}\in V(K_m)$ and $\{x_s,x_{s+1}\}\in V(K_n)$, and the orientation of the edges between $\{x,y\}$ is $x \rightarrow y$, and between $\{x_s,x_{s+1}\}$ is $x_s \rightarrow x_{s+1}$ in the semi-transitive orientation $S$. Then, the semi-transitive orientation $S$ follows the following conditions:
    \begin{enumerate}
        \item If the orientation of the edges between $\{x_s,x\}$ is $x_s\rightarrow x$ and between $\{y,x_{s+1}\}$ is $y\rightarrow x_{s+1}$ in the semi-transitive orientation $S$, then $x\rightarrow x_{s+1}$ and $x_s\rightarrow y$ in the semi-transitive orientation $S$.
        \item If the orientation of the edges between $\{x,x_{s+1}\}$ is $x\rightarrow x_{s+1}$ and between $\{x_s,x\}$ is $y\rightarrow x_s$ in the semi-transitive orientation $S$, then $x\rightarrow x_s$ and $y\rightarrow x_{s+1}$ in the semi-transitive orientation $S$.
    \end{enumerate}
\end{lemma}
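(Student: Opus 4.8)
The plan is to treat both parts uniformly: in each case the four vertices $x_s, x, y, x_{s+1}$ already carry a directed Hamiltonian path together with a chord joining its source to its sink, so they host a semi-cycle, and I will squeeze the two unknown cross-edges between acyclicity (which fixes their orientation once they exist) and shortcut-freeness (which forces them to exist at all). Throughout I use only that $S$ is acyclic and contains no shortcut, together with the elementary fact that a reversed chord would close a directed triangle.

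For part 1, the hypotheses give the arcs $x_s\rightarrow x$, $x\rightarrow y$, $y\rightarrow x_{s+1}$ and, from the global assumptions of the lemma, $x_s\rightarrow x_{s+1}$. Hence $x_s\rightarrow x\rightarrow y\rightarrow x_{s+1}$ is a directed path through all four vertices and $x_s\rightarrow x_{s+1}$ is a chord from its source to its sink; reversing this chord recovers the directed $4$-cycle $x_s\rightarrow x\rightarrow y\rightarrow x_{s+1}\rightarrow x_s$, so these arcs constitute a semi-cycle on $\{x_s,x,y,x_{s+1}\}$. I would first argue that the edges $x_s y$ and $x x_{s+1}$ must both be present: if, say, $x x_{s+1}$ were absent, then $(x,x_{s+1})$ would be a non-adjacent pair in the subgraph induced by the vertices of this semi-cycle, and since that induced subgraph is acyclic it would be a shortcut, contradicting semi-transitivity; the same argument applied to $(x_s,y)$ forces $x_s y$ to be present. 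With both edges present, acyclicity fixes their directions: orienting $x x_{s+1}$ as $x_{s+1}\rightarrow x$ would close the directed triangle $x\rightarrow y\rightarrow x_{s+1}\rightarrow x$, so $x\rightarrow x_{s+1}$; orienting $x_s y$ as $y\rightarrow x_s$ would close $x_s\rightarrow x\rightarrow y\rightarrow x_s$, so $x_s\rightarrow y$, which is exactly the claimed conclusion.

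Part 2 is handled by the mirror image of this argument, reading the hypothesis on the second edge as the arc $y\rightarrow x_s$ on the edge joining $y$ and $x_s$. Now the arcs $x\rightarrow y$, $y\rightarrow x_s$, $x_s\rightarrow x_{s+1}$ give the directed path $x\rightarrow y\rightarrow x_s\rightarrow x_{s+1}$, and the assumed arc $x\rightarrow x_{s+1}$ is again a source-to-sink chord, producing a semi-cycle on $\{x,y,x_s,x_{s+1}\}$. As before, shortcut-freeness forces the two remaining cross-edges $x x_s$ and $y x_{s+1}$ to be present, and acyclicity then orients them as $x\rightarrow x_s$ (else the triangle $x\rightarrow y\rightarrow x_s\rightarrow x$ closes) and $y\rightarrow x_{s+1}$ (else $y\rightarrow x_s\rightarrow x_{s+1}\rightarrow y$ closes). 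I expect the only delicate point to be the application of the shortcut definition: one must check that the relevant four-vertex induced subgraph is genuinely ``induced by the vertices of a semi-cycle'' and remains acyclic, so that the presence of a single non-adjacent pair already certifies a shortcut. Once this is granted, both the existence of the two cross-edges and their forced orientations follow immediately, and the two parts are completely symmetric.
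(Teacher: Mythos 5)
Your argument is correct and complete: the four hypothesised arcs $x_s\rightarrow x\rightarrow y\rightarrow x_{s+1}$ together with $x_s\rightarrow x_{s+1}$ (and, in part~2, $x\rightarrow y\rightarrow x_s\rightarrow x_{s+1}$ with $x\rightarrow x_{s+1}$) do form a semi-cycle, so shortcut-freeness forces the two remaining cross-pairs to be adjacent, and acyclicity then pins down their orientations via the directed triangles you name. Note that the paper itself states this lemma without proof, importing it from the cited reference \cite{das2025word}, so there is no in-paper argument to compare against; your proof is the natural one implied by the definitions of semi-cycle and shortcut given in the preliminaries, and it correctly resolves the typo in part~2 by reading the second hypothesis as the arc $y\rightarrow x_s$ on the edge joining $y$ and $x_s$.
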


\begin{lemma}(\textit{\cite{das2025word}})\label{lm43}
Suppose the graph $\overline{B}(K_m,K_n)$ is word-representable and $S$ is a semi-transitive orientation of the graph. Let $x\in V(\overline{B}(K_m,K_n))$, and $x$ is a type $C$ vertex. Suppose $x_s$ and $x_{s+1}$ are the last and first vertices of the source and sink group, respectively.
Now, for $y\in V(\overline{B}(K_m,K_n))$, if $x$ and $y$ are in the same partition in the graph $V(\overline{B}(K_m,K_n))$, then the following conditions hold for $y$.

\begin{enumerate}
    \item If the edges between $x$ and $y$ are oriented as $x\rightarrow y$, $y$ is not a type $A$ vertex that is adjacent to both $x_s$ and $x_{s+1}$. Also, if $y$ is a type $C$ vertex, then $x_s$ cannot be contained in the sink group of $y$.
    \item If the edges between $x$ and $y$ are oriented as $y\rightarrow x$, $y$ is not a type $B$ vertex that is adjacent to both $x_s$ and $x_{s+1}$. Also, if $y$ is a type $C$ vertex, then $x_{s+1}$ cannot be contained in the source group of $y$.
\end{enumerate}
\end{lemma}

\begin{theorem}(\textit{\cite{das2025word}})\label{thm45}
    A co-bipartite graph $\overline{B}(K_m,K_n)$ has a semi-transitive orientation if and only if 
\begin{itemize} 
\item $K_m$ and $K_n$ are oriented transitively,
\item In the graph $\overline{B}(K_m,K_n)$, every vertex belongs to one of the three types described in Observation \ref{obs41}, and 
\item The conditions present in Lemmas \ref{lm41}, \ref{lm42} and \ref{lm43} are satisfied. 
\end{itemize}
\end{theorem}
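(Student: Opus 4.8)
The statement is a biconditional, so I would split the argument into necessity and sufficiency. The necessity direction is essentially a repackaging of the results already quoted above. Suppose $S$ is a semi-transitive orientation of $\overline{B}(K_m,K_n)$. Since $S$ is acyclic and the subgraphs induced by $V(K_m)$ and $V(K_n)$ are complete, $S$ restricts to an acyclic tournament on each clique; an acyclic tournament is transitive, which yields the first bullet. The second bullet is exactly Observation \ref{obs41}, and the third bullet is the conjunction of Lemmas \ref{lm41}, \ref{lm42} and \ref{lm43}, each of which takes an arbitrary semi-transitive orientation as hypothesis. Thus necessity demands no new work beyond invoking these.

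For sufficiency I would take an orientation $S$ that (i) restricts to transitive tournaments on $V(K_m)$ and $V(K_n)$, with associated linear orders $\vec{P_1}$ and $\vec{P_2}$; (ii) assigns every vertex one of the types A, B, C of Observation \ref{obs41}; and (iii) satisfies the constraints in Lemmas \ref{lm41}--\ref{lm43}; and I would verify the two defining properties of a semi-transitive orientation, acyclicity and absence of shortcuts. For acyclicity, recall first that every non-edge of $\overline{B}(K_m,K_n)$ joins $V(K_m)$ to $V(K_n)$, so any directed cycle must leave and re-enter each clique and hence use at least two cross edges. I would exploit the fact that a type A (resp. B, C) vertex sends its cross edges to (resp. receives them from, both) a \emph{consecutive} block of the opposite path to show that all cross edges respect a common refinement of $\vec{P_1}$ and $\vec{P_2}$; combining this monotonicity with transitivity inside each clique produces a linear extension of $S$, so no directed cycle can close.

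For shortcut-freeness I would use that, since $S$ is acyclic, it suffices to show that for every directed path $p=v_0\to v_1\to\cdots\to v_k=q$ carrying a shortcutting edge $p\to q$, the induced digraph on $\{v_0,\dots,v_k\}$ is transitive; equivalently, that no such configuration contains a non-adjacent pair. Because non-adjacent pairs are necessarily cross-partition, a shortcut would force a non-edge between some $v_i\in V(K_m)$ and some $v_j\in V(K_n)$ that is straddled by the Hamiltonian path and the shortcutting edge. I would classify the candidate minimal shortcuts according to the types of $p$ and $q$ and of the two endpoints of the offending non-edge, together with the orientation of the edge connecting those endpoints, and in each case invoke the precise conclusion of Lemma \ref{lm41}, \ref{lm42} or \ref{lm43} to contradict either the existence of the non-edge or the existence of the traversing path or shortcutting edge.

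The hard part will be this last, shortcut-free direction, and specifically making the case analysis exhaustive. The difficulty is twofold: first, reducing an arbitrary shortcut to a bounded local configuration on which the lemmas can act — one must argue that if \emph{any} shortcut exists then one with a controlled interaction between the two cliques exists; and second, matching each resulting local configuration to the exact hypothesis pattern of Lemmas \ref{lm41}--\ref{lm43} (type A versus B versus C of the straddling endpoints, the orientation of the connecting edge, and whether common neighbours in the opposite clique are present). Demonstrating that these three lemmas \emph{jointly} block every way a non-adjacent cross pair could be straddled, with no uncovered case, is where the genuine content lies; by comparison the acyclicity argument and the necessity direction are routine.
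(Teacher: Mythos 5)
First, a point of reference: the paper does not actually prove Theorem~\ref{thm45} --- it is imported verbatim from \cite{das2025word} and used as a black box. The closest thing in this paper to the argument you are sketching is the proof of Theorem~\ref{thmlab}, which establishes the analogous equivalence in terms of a vertex ordering and whose sufficiency direction carries out, in full, exactly the case analysis you defer. So the comparison has to be against that proof, and against it your proposal has a genuine gap: the entire sufficiency direction is a plan rather than an argument. You correctly identify that the hard step is reducing an arbitrary shortcut to a bounded local configuration on which Lemmas~\ref{lm41}--\ref{lm43} can bite, and you correctly identify that one must then check that the lemmas jointly cover every such configuration --- but you do neither. The reduction is not routine: a shortcut can have arbitrarily many vertices, and the paper's proof of Theorem~\ref{thmlab} handles this by locating the \emph{first} non-adjacent pair $v_i, v_j$ along the Hamiltonian path of the shortcut and then splitting on whether $v_i$ is the initial vertex and on the partition membership of $v_{i-1}$ and $v_{j-1}$ (resp.\ $v_p$), yielding eight subcases (1.1--1.4, 2.1--2.4), some of which require introducing auxiliary vertices $v_q, v_r$ to even reach a configuration the local conditions address. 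Writing ``I would classify the candidate minimal shortcuts \ldots and in each case invoke the precise conclusion'' names the task without performing it; as you yourself note, this is where the genuine content lies.

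The necessity direction is fine (an acyclic tournament is transitive, and the Observation and Lemmas are each stated for an arbitrary semi-transitive orientation), but your acyclicity argument for sufficiency is also underspecified. The assertion that the cross edges ``respect a common refinement of $\vec{P_1}$ and $\vec{P_2}$'' is not automatic from the type decomposition alone: a type $C$ vertex of $K_m$ receives edges from an initial segment of $\vec{P_2}$ and sends edges to a terminal segment, so building a single linear extension requires the monotonicity of the intervals $[x_i,y_i]$ (resp.\ $[1,x_i]\cup[y_i,n]$) along $\vec{P_1}$, which is precisely the content of conditions 2--5 of Theorem~\ref{thmlab} and must be \emph{derived} from Lemmas~\ref{lm41}--\ref{lm43} before it can be used. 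In short, the skeleton is right and matches the strategy the paper uses for its own Theorem~\ref{thmlab}, but the flesh --- the exhaustive case analysis and the derivation of the interval-monotonicity that powers both acyclicity and shortcut-freeness --- is missing.
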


We study the conditions under which a vertex ordering of a co-bipartite graph becomes word-representable. We derive a necessary and sufficient condition for a word-representable co-bipartite graph based on the vertex ordering obtained from Observation \ref{obs41}. In the following theorems, we denote the neighbours of a type $A$ or $B$ vertex $a_i$ (excluding the vertices of $K_m$) as $[x_i,y_i]=\{x_i,x_{i+1},\ldots, y_{i-1},y_i\}$, where $\{x_i,x_{i+1},\ldots, y_{i-1},y_i\}\subseteq V(K_n)$ and the neighbour of type $C$ vertex $c_i$ (excluding the vertices of $K_m$) as $[1,x_i]\cup[y_i,n]=\{1,2,\ldots,x_i,y_i,\ldots,n\}$, where $\{1,2,\ldots,x_i,y_i,\ldots,n\}\subseteq V(K_n)$.

\begin{theorem}\label{thmlab}
    A co-bipartite graph $\overline{B}(K_m,K_n)$, where $V(K_n)=\{1,2,\ldots,n\}$, is word-representable if and only if there exists a vertex ordering for the vertices of $K_m$ that satisfies the following conditions:
    \begin{enumerate}
       \item The ordering of type $A$, $B$ and $C$ vertices is $A<C<B$. 
        \item Suppose $c_i$ and $c_j$ are type $C$ vertices, $c_i<c_j$, $c_i,c_j\in V(K_m)$. Let $N_{c_i}\setminus V(K_m)=[1,x_i]\cup [y_i,n]$, $N_{c_j}\setminus V(K_m)=[1,x_j]\cup [y_j,n]$. Then $x_i\leq x_j$ and $y_i\leq y_j$. 
        \item Suppose $a_i$ and  $a_j$ are type $A$ or type $B$ vertices, $a_i<a_j$, $a_i,a_j\in V(K_m)$. Let $N_{a_i}\setminus V(K_m)=[x_i,y_i]$, $N_{a_j}\setminus V(K_m)=[x_j,y_j]$. Then $x_i\leq x_j$ and $y_i\leq y_j$. 
        \item Suppose $a_i$ is type $A$ (or $B$) vertex and $c_j$ is type $C$ vertex, $a_i<c_j$(or $a_i>c_j$ for type $B$), $a_i,c_j\in V(K_m)$. 
        Let $N_{a_i}\setminus V(K_m)=[x_i,y_i]$, $N_{c_j}\setminus V(K_m)=[1,x_j]\cup [y_j,n]$. Then $x_i\leq y_j$ and $x_j\leq y_i$.
        \item Suppose $a_i$ is type $A$ vertex and $b_j$ is type $B$ vertex. Let $N_{a_i}\setminus V(K_m)=[x_i,y_i]$, $N_{b_j}\setminus V(K_m)=[x_j,y_j]$. Then $x_j\leq x_i$ and $y_j\leq y_i$. 
        \item Suppose $v\in V(K_m)$ and $N_v\setminus V(K_m)=\emptyset$. Let $a_i$ be the last type $A$ vertex and $c_j$ be any type $C$ vertex. Then $a_i<v<c_j$ and $y_i<y_j$, where $N_{a_i}\setminus V(K_m)=[x_i,y_i]$, $N_{c_j}\setminus V(K_m)=[1,x_j]\cup [y_j,n]$.
    \end{enumerate}
\end{theorem}

\begin{proof}
We prove necessity by showing that violating any condition creates a shortcut, and sufficiency by showing that no shortcut can arise when all conditions are satisfied. Suppose there exists a semi-transitive orientation $S$ of the word-representable graph $\overline{B}(K_m,K_n)$ such that these conditions are not satisfied by $S$. In the following cases, we show that each condition is necessary for the orientation $S$.
    
     \textbf{Case 1}: Suppose the ordering of types $A$, $B$, and $C$ is not $A < C < B$. If either $B < A$ or $C < A$, then a shortcut occurs where $u$ is a type $B$ or type $C$ vertex and $v$ is a type $A$ vertex, since the edge $v \rightarrow x_s$ cannot exist. Here, $x_s$ and $x_{s+1}$ are vertices of $K_n$, and the induced subgraph on $\{u, v, x_s, x_{s+1}\}$ forms a shortcut, as shown in Figure~\ref{Case 1}.

    \begin{figure}[H]
\begin{center}
\begin{tikzpicture}[node distance=1cm,auto,main node/.style={fill,circle,draw,inner sep=1pt,minimum size=5pt}]

\node [main node](1) {};
\node [right of=1,xshift=-5mm] {$x_{s}$};
\node [main node](2) [ below of=1] {};
\node [right of=2,xshift=-5mm] {$x_{s+1}$};
\node [main node](3) [ left of=1] {};
\node [left of=3,xshift=5mm] {$u$};
\node [main node](4) [ left of=2] {};
\node [left of=4,xshift=5mm] {$v$};

\draw[->] (1) -- (2);
\draw[->] (3) -- (4);
\draw[->] (1) -- (3);
\draw[->] (4) -- (2);

\end{tikzpicture}
\caption{\label{Case 1} Induced subgraph of the vertices $B$, $A$, $x_{s}$ and $x_{s+1}$.}
\end{center}
\end{figure}
Also, if $B < C$, the same shortcut occurs, where $u$ is a type $B$ vertex and $v$ is a type $C$ vertex, since the edge $v \rightarrow x_s$ cannot exist. Therefore, the ordering of types $A$, $B$, and $C$ must be $A < C < B$.

 \textbf{Case 2:} Suppose $c_i$ and $c_j$ are two type $C$ vertices with $c_i<c_j$ such that $x_i>x_j$ or $y_i>y_j$. For both of these cases, we obtain the following shortcuts: $x_j\rightarrow x_i\rightarrow c_i\rightarrow c_j$ and $c_i\rightarrow c_j\rightarrow y_j\rightarrow y_i$, respectively. Therefore, $x_i\leq x_j$ and $y_i\leq y_j$.
\begin{figure}[H]
\begin{center}

\begin{tabular}{ccc}

\begin{tikzpicture}[node distance=1cm,auto,main node/.style={fill,circle,draw,inner sep=1pt,minimum size=5pt}]

\node [main node](1) {};
\node [right of=1,xshift=-5mm] {$x_{j}$};
\node [main node](2) [ below of=1] {};
\node [right of=2,xshift=-5mm] {$x_{i}$};
\node [main node](3) [ left of=1] {};
\node [left of=3,xshift=5mm] {$c_i$};
\node [main node](4) [ left of=2] {};
\node [left of=4,xshift=5mm] {$c_j$};
\node [below right of=4,xshift=-2.5mm] {(I)};
\draw[->] (1) -- (2);
\draw[->] (3) -- (4);
\draw[->] (1) -- (4);
\draw[->] (2) -- (3);

\end{tikzpicture}

\ \ \ \ \  \ \ \ \ \ \ \ \ \ \ \ \ \  \ \ &
\begin{tikzpicture}[node distance=1cm,auto,main node/.style={fill,circle,draw,inner sep=1pt,minimum size=5pt}]

\node [main node](1) {};
\node [right of=1,xshift=-5mm] {$y_{j}$};
\node [main node](2) [ below of=1] {};
\node [right of=2,xshift=-5mm] {$y_{i}$};
\node [main node](3) [ left of=1] {};
\node [left of=3,xshift=5mm] {$c_i$};
\node [main node](4) [ left of=2] {};
\node [left of=4,xshift=5mm] {$c_j$};
\node [below right of=4,xshift=-2.5mm] {(II)};
\draw[->] (1) -- (2);
\draw[->] (3) -- (4);
\draw[->] (4) -- (1);
\draw[->] (3) -- (2);

\end{tikzpicture}

\end{tabular}

\caption{\label{Case 2} Induced subgraph of the vertices $c_i$, $c_j$, $x_{i}$, $x_{j}$ (I) and $c_i$, $c_j$, $y_{i}$, $y_{j}$ (II).}
\end{center}
\end{figure}

 \textbf{Case 3:} Suppose $a_i$ and $a_j$ are type $A$ vertices, $a_i<a_j$ such that $x_i>x_j$ or $y_i>y_j$. For both of these cases, we obtain the following shortcuts: $a_i\rightarrow a_j\rightarrow x_j\rightarrow x_i$ and $a_i\rightarrow a_j\rightarrow y_j\rightarrow y_i$, respectively, as shown in Figure \ref{Case 3a}. Therefore, $x_i\leq x_j$ and $y_i\leq y_j$.
\begin{figure}[H]
\begin{center}
\begin{tabular}{ccc}
\begin{tikzpicture}[node distance=1cm,auto,main node/.style={fill,circle,draw,inner sep=1pt,minimum size=5pt}]

\node [main node](1) {};
\node [right of=1,xshift=-5mm] {$x_{j}$};
\node [main node](2) [ below of=1] {};
\node [right of=2,xshift=-5mm] {$x_{i}$};
\node [main node](3) [ left of=1] {};
\node [left of=3,xshift=5mm] {$a_i$};
\node [main node](4) [ left of=2] {};
\node [left of=4,xshift=5mm] {$a_j$};
\node [below right of=4,xshift=-2.5mm] {(I)};
\draw[->] (1) -- (2);
\draw[->] (3) -- (4);
\draw[->] (3) -- (2);
\draw[->] (4) -- (1);
\end{tikzpicture}
\ \ \ \ \  \ \ \ \ \ \ \ \ \ \ \ \ \  \ \ &
\begin{tikzpicture}[node distance=1cm,auto,main node/.style={fill,circle,draw,inner sep=1pt,minimum size=5pt}]

\node [main node](1) {};
\node [right of=1,xshift=-5mm] {$y_{j}$};
\node [main node](2) [ below of=1] {};
\node [right of=2,xshift=-5mm] {$y_{i}$};
\node [main node](3) [ left of=1] {};
\node [left of=3,xshift=5mm] {$a_i$};
\node [main node](4) [ left of=2] {};
\node [left of=4,xshift=5mm] {$a_j$};
\node [below right of=4,xshift=-2.5mm] {(II)};
\draw[->] (1) -- (2);
\draw[->] (3) -- (4);
\draw[->] (4) -- (1);
\draw[->] (3) -- (2);
\end{tikzpicture}
\end{tabular}
\caption{\label{Case 3a} Induced subgraph of the vertices $a_i$, $a_j$, $x_{i}$, $x_{j}$ (I) and $a_i$, $a_j$, $y_{i}$, $y_{j}$ (II).}
\end{center}
\end{figure}

Suppose $b_i$ and $b_j$ are type $B$ vertices, $b_i<b_j$ such that $x_i>x_j$ or $y_i>y_j$. For both of these cases, we obtain the following shortcuts: $x_j\rightarrow x_i\rightarrow b_i\rightarrow b_j$ and $y_j\rightarrow y_i\rightarrow b_i\rightarrow b_j$, respectively, as shown in Figure \ref{Case 3b}. Therefore, $x_i\leq x_j$ and $y_i\leq y_j$.
\begin{figure}[H]
\begin{center}

\begin{tabular}{ccc}

\begin{tikzpicture}[node distance=1cm,auto,main node/.style={fill,circle,draw,inner sep=1pt,minimum size=5pt}]

\node [main node](1) {};
\node [right of=1,xshift=-5mm] {$x_{j}$};
\node [main node](2) [ below of=1] {};
\node [right of=2,xshift=-5mm] {$x_{i}$};
\node [main node](3) [ left of=1] {};
\node [left of=3,xshift=5mm] {$b_i$};
\node [main node](4) [ left of=2] {};
\node [left of=4,xshift=5mm] {$b_j$};
\node [below right of=4,xshift=-2.5mm] {(I)};
\draw[->] (1) -- (2);
\draw[->] (3) -- (4);
\draw[->] (2) -- (3);
\draw[->] (1) -- (4);

\end{tikzpicture}

\ \ \ \ \  \ \ \ \ \ \ \ \ \ \ \ \ \  \ \ &
\begin{tikzpicture}[node distance=1cm,auto,main node/.style={fill,circle,draw,inner sep=1pt,minimum size=5pt}]

\node [main node](1) {};
\node [right of=1,xshift=-5mm] {$y_{j}$};
\node [main node](2) [ below of=1] {};
\node [right of=2,xshift=-5mm] {$y_{i}$};
\node [main node](3) [ left of=1] {};
\node [left of=3,xshift=5mm] {$a_i$};
\node [main node](4) [ left of=2] {};
\node [left of=4,xshift=5mm] {$a_j$};
\node [below right of=4,xshift=-2.5mm] {(II)};
\draw[->] (1) -- (2);
\draw[->] (3) -- (4);
\draw[->] (1) -- (4);
\draw[->] (2) -- (3);

\end{tikzpicture}

\end{tabular}

\caption{\label{Case 3b} Induced subgraph of the vertices $b_i$, $b_j$, $x_{i}$, $x_{j}$ (I) and $a_i$, $a_j$, $y_{i}$, $y_{j}$ (II).}
\end{center}
\end{figure}

 \textbf{Case 4:} Suppose $a_i$, $c_j$ are type $A$ and type $C$ vertices, respectively, $a_i<c_j$ such that $x_i>y_j$ or $x_j>y_i$. For both of these cases, we obtain the shortcuts: $a_i\rightarrow c_j\rightarrow y_j\rightarrow x_i$ and $a_i\rightarrow y_i\rightarrow x_j\rightarrow c_j$, respectively. Hence,  $x_i\leq y_j$ and $x_j\leq y_i$. 
 \begin{figure}[H]
\begin{center}

\begin{tabular}{ccc}

\begin{tikzpicture}[node distance=1cm,auto,main node/.style={fill,circle,draw,inner sep=1pt,minimum size=5pt}]
\node [main node](1) {};
\node [right of=1,xshift=-5mm] {$y_{j}$};
\node [main node](2) [ below of=1] {};
\node [right of=2,xshift=-5mm] {$x_{i}$};
\node [main node](3) [ left of=1] {};
\node [left of=3,xshift=5mm] {$a_i$};
\node [main node](4) [ left of=2] {};
\node [left of=4,xshift=5mm] {$c_j$};
\node [below right of=4,xshift=-2.5mm] {(I)};
\draw[->] (1) -- (2);
\draw[->] (3) -- (4);
\draw[->] (4) -- (1);
\draw[->] (3) -- (2);

\end{tikzpicture}

\ \ \ \ \  \ \ \ \ \ \ \ \ \ \ \ \ \  \ \ &
\begin{tikzpicture}[node distance=1cm,auto,main node/.style={fill,circle,draw,inner sep=1pt,minimum size=5pt}]

\node [main node](1) {};
\node [right of=1,xshift=-5mm] {$y_{i}$};
\node [main node](2) [ below of=1] {};
\node [right of=2,xshift=-5mm] {$x_{j}$};
\node [main node](3) [ left of=1] {};
\node [left of=3,xshift=5mm] {$a_i$};
\node [main node](4) [ left of=2] {};
\node [left of=4,xshift=5mm] {$c_j$};
\node [below right of=4,xshift=-2.5mm] {(II)};
\draw[->] (1) -- (2);
\draw[->] (3) -- (4);
\draw[->] (3) -- (1);
\draw[->] (2) -- (4);

\end{tikzpicture}

\end{tabular}

\caption{\label{Case 4} Induced subgraph of the vertices $a_i$, $c_j$, $x_{i}$, $y_{j}$ (I) and $a_i$, $c_j$, $y_{i}$, $x_{j}$ (II).}
\end{center}
\end{figure}

  \textbf{Case 5:} Suppose $a_i$, $b_j$ are type $A$ and type $B$ vertices, respectively, $a_i<b_j$ such that $x_i<x_j$ or $y_i<y_j$. For both of these cases, we obtain the shortcuts: $a_i\rightarrow x_i\rightarrow x_j\rightarrow b_j$ and  $a_i\rightarrow y_i\rightarrow y_j\rightarrow b_j$, respectively. Hence, $x_i\geq x_j$ and $y_i\geq y_j$.
\begin{figure}[H]
\begin{center}

\begin{tabular}{ccc}

\begin{tikzpicture}[node distance=1cm,auto,main node/.style={fill,circle,draw,inner sep=1pt,minimum size=5pt}]

\node [main node](1) {};
\node [right of=1,xshift=-5mm] {$x_{i}$};
\node [main node](2) [ below of=1] {};
\node [right of=2,xshift=-5mm] {$x_{j}$};
\node [main node](3) [ left of=1] {};
\node [left of=3,xshift=5mm] {$a_i$};
\node [main node](4) [ left of=2] {};
\node [left of=4,xshift=5mm] {$b_j$};
\node [below right of=4,xshift=-2.5mm] {(I)};
\draw[->] (1) -- (2);
\draw[->] (3) -- (4);
\draw[->] (3) -- (1);
\draw[->] (2) -- (4);

\end{tikzpicture}

\ \ \ \ \  \ \ \ \ \ \ \ \ \ \ \ \ \  \ \ &
\begin{tikzpicture}[node distance=1cm,auto,main node/.style={fill,circle,draw,inner sep=1pt,minimum size=5pt}]

\node [main node](1) {};
\node [right of=1,xshift=-5mm] {$y_{i}$};
\node [main node,yshift=1mm](2) [ below of=1] {};
\node [right of=2,xshift=-5mm] {$y_{j}$};
\node [main node](3) [ left of=1] {};
\node [left of=3,xshift=5mm] {$a_i$};
\node [main node](4) [ left of=2] {};
\node [left of=4,xshift=5mm] {$b_j$};
\node [below right of=4,xshift=-2.5mm] {(II)};
\draw[->] (1) -- (2);
\draw[->] (3) -- (4);
\draw[->] (3) -- (1);
\draw[->] (2) -- (4);

\end{tikzpicture}

\end{tabular}

\caption{\label{Case 5} Induced subgraph of the vertices $a_i$, $b_j$, $x_{i}$, $x_{j}$ (I) and $a_i$, $b_j$, $y_{i}$, $y_{j}$ (II).}
\end{center}
\end{figure}

\textbf{Case 6:} For a vertex $v$, $v\in V(K_m)$ and $N_v\setminus V(K_m)=\emptyset$, suppose $v<a_i$ or $c_j<v$, $a_i$ is the last type $A$ vertex and $c_j$ is any type $C$ vertex. Then the following shortcut occurs: $v\rightarrow a_i\rightarrow x_i\rightarrow c_j$ and $a_i\rightarrow y_i\rightarrow c_j\rightarrow v$, respectively. Hence, $v\geq a_i$ and $c_j\geq v$.

\begin{figure}[H]
\begin{center}

\begin{tabular}{ccc}

\begin{tikzpicture}[node distance=1cm,auto,main node/.style={fill,circle,draw,inner sep=1pt,minimum size=5pt}]

\node [main node](1) {};
\node [right of=1,xshift=-5mm] {$x_{i}$};
\node [main node ](2) [ below of=1] {};
\node [right of=2,xshift=-5mm] {$x_{j}$};
\node [main node](3) [ left of=1] {};
\node [left of=3,xshift=5mm] {$a_i$};
\node [main node](4) [ left of=2] {};
\node [left of=4,xshift=5mm] {$c_j$};
\node [below right of=4,xshift=-2.5mm] {(I)};
\node [main node](5) [ above of=3] {};
\node [right of=5,xshift=-5mm] {$v$};
\draw[->] (1) -- (2);
\draw[->] (3) -- (4);
\draw[->] (3) -- (1);
\draw[->] (2) -- (4);
\draw[->] (3) -- (2);
\draw[->] (1) -- (4);
\draw[->] (5) -- (3);
\draw [->] (5) to [in=160, out=200, distance=1.2cm] (4);
\end{tikzpicture}

\ \ \ \ \  \ \ \ \ \ \ \ \ \ \ \ \ \  \ \ &
\begin{tikzpicture}[node distance=1cm,auto,main node/.style={fill,circle,draw,inner sep=1pt,minimum size=5pt}]

\node [main node](1) {};
\node [right of=1,xshift=-5mm] {$y_{i}$};
\node [main node](2) [ below of=1] {};
\node [right of=2,xshift=-5mm] {$y_{j}$};
\node [main node](3) [ left of=1] {};
\node [left of=3,xshift=5mm] {$a_i$};
\node [main node](4) [ left of=2] {};
\node [left of=4,xshift=5mm] {$c_j$};
\node [main node](5) [ below of=4] {};
\node [right of=5,xshift=-5mm] {$v$};
\node [below right of=5,xshift=-2.5mm] {(II)};

\draw[->] (1) -- (2);
\draw[->] (3) -- (4);
\draw[->] (3) -- (1);
\draw[->] (2) -- (4);
\draw[->] (3) -- (2);
\draw[->] (1) -- (4);
\draw[->] (5) -- (3);
\draw [->] (3) to [in=160, out=200, distance=1.2cm] (5);
\end{tikzpicture}
\end{tabular}
\caption{\label{Case 61} Induced subgraph of the vertices $v$, $a_i$, $c_j$, $x_{i}$, $x_{j}$ (I) and $v$, $a_i$, $c_j$, $y_{i}$, $y_{j}$ (II).}
\end{center}
\end{figure}
Now, suppose $a_i<v<c_j$ and $y_i\geq y_j$, where $N_{a_i}\setminus V(K_m)=[x_i,y_i]$, $N_{c_j}\setminus V(K_m)=[1,x_j]\cup [y_j,n]$. Then, the shortcut $a_i\rightarrow v\rightarrow c_j\rightarrow y_j$ occurs. Therefore, $y_i< y_j$.
\begin{figure}[H]
\begin{center}

\begin{tikzpicture}[node distance=1cm,auto,main node/.style={fill,circle,draw,inner sep=1pt,minimum size=5pt}]

\node [main node](1) {};
\node [right of=1,xshift=-5mm] {$y_{j}$};
\node [main node](2) [ below of=1] {};
\node [right of=2,xshift=-5mm] {$y_{i}$};
\node [main node](3) [ left of=1] {};
\node [left of=3,xshift=5mm] {$v$};
\node [main node](4) [ left of=2] {};
\node [left of=4,xshift=5mm] {$c_j$};
\node [main node](5) [ above of=3] {};
\node [right of=5,xshift=-5mm] {$a_i$};
\draw[->] (1) -- (2);
\draw[->] (5) -- (4);
\draw[->] (5) -- (1);
\draw[->] (4) -- (2);
\draw[->] (5) -- (2);
\draw[->] (4) -- (1);
\draw[->] (5) -- (3);
\draw [->] (5) to [in=160, out=200, distance=1.2cm] (4);

\end{tikzpicture}
\caption{\label{Case 62} Induced subgraph of the vertices $a_i$, $v$, $c_j$, $y_{i}$, $y_{j}$.}
\end{center}
\end{figure}

Since word-representable co-bipartite graphs follow the above conditions, we need to prove that it is sufficient to follow this condition to obtain a word-representable co-bipartite graph. To prove this, we first provide an orientation based on this ordering. In \( K_m \), the orientation of edges is defined as \( i \rightarrow j \) for \( i, j \in V(K_m) \) where \( i < j \). The orientation of the edges connecting vertices \( u \) and \( v \), with \( u \in V(K_m) \) and \( v \in V(K_n) \), is defined in Observation \ref{obs41} based on the vertex \( u \).
 Now we need to prove that if these conditions are satisfied, then this orientation is semi-transitive. Suppose $S$ is an orientation of the graph $\overline{B}(K_m,K_n)$ that satisfies the mentioned conditions. We assume that $S$ is not a semi-transitive orientation. So, there exists a shortcut in the orientation $S$. Suppose the induced subgraph of $v_1$, $v_2$, $\ldots$, $v_p$ vertices of the graph  $\overline{B}(K_m,K_n)$ creates a shortcut $P$.

In the shortcut $P$, vertices should be present from both $K_m$ and $K_n$. Otherwise, $P$ is a complete graph, not a shortcut. First, we check for each $v_i$, $1\leq i\leq p$ does there exist a $v_j$, $1\leq i<j\leq p$ such that $v_i\nsim v_j$. There should be at least one such $v_i$ that needs to be present in the shortcut $P$. We assume that $v_i$ and $v_j$ are the first two vertices we obtained in $P$ such that $v_i\nsim v_j$. Now, we consider the following two cases:

\textbf{Case 1:} Suppose $v_1= v_i$. Now, $v_1$ is adjacent to every $v_a$, $1\leq a<j$. So, we obtain a shortcut in the shortcut $P$ in the induced subgraph of the vertices $v_1$, $v_{j-1}$, $v_{j}$, $\ldots$ , $v_p$.
Now, we know that $v_1$ and $v_j$ are in different partitions in the graph $\overline{B}(K_m,K_n)$. Without loss of generality, we assume $v_1\in V(K_m)$ and $v_j\in V(K_n)$. So, the possible cases for $v_{j-1}$, $v_p$ are $\{v_{j-1},v_p\}\subseteq V(K_m)$ or $\{v_{j-1},v_p\}\subseteq V(K_n)$ or $v_{j-1}\in V(K_m), v_p\in V(K_n)$ or $v_{j-1}\in V(K_n), v_p\in V(K_m)$. In the following, we prove that this shortcut cannot exist for each case.

\textbf{Case 1.1:} If $\{v_{j-1},v_p\}\subseteq V(K_m)$, then there exists some $v_q\in V(K_n)$ and $v_r\in V(K_m)$, $j\leq q\leq n$, $j-1\leq r \leq p$, such that the orientation of the edge between $v_q$ and $v_r$ is $v_q\rightarrow v_r$, otherwise, $v_p\notin V(K_m)$. The following orientation occurs for the induced subgraph of the vertices $v_1$, $v_{j-1}$, $v_j$, $v_q$, $v_r$ and $v_p$.

\begin{figure}[H]
\begin{center}
\begin{tikzpicture}[node distance=1cm,auto,main node/.style={fill,circle,draw,inner sep=1pt,minimum size=5pt}]

\node [main node](1) {};
\node [left of=1,xshift=6mm] {$v_1$};
\node [main node,yshift=1mm](2) [ below of=1] {};
\node [left of=2,xshift=5mm] {$v_{j-1}$};
\node [main node,yshift=1mm](3) [ below of=2] {};
\node [left of=3,xshift=5mm] {$v_{r}$};
\node [main node,yshift=1mm](4) [ below of=3] {};
\node [left of=4,xshift=6mm] {$v_{p}$};
\node [main node](5) [ right of=2] {};
\node [right of=5,xshift=-5mm] {$v_j$};
\node [main node](6) [ right of=3] {};
\node [right of=6,xshift=-5mm] {$v_q$};

\draw[->] (1) -- (2);
\draw[->] (2) -- (3);
\draw[->] (3) -- (4);
\draw[->] (5) -- (6);
\draw[->] (2) -- (5);
\draw[->] (6) -- (3);
\draw [->] (1) to [in=160, out=200, distance=1.2cm] (4);
\draw [->] (1) to [bend right=135, looseness=2.4] (3);   

\end{tikzpicture}
\caption{\label{fig8} Induced subgraph of the vertices $v_1$, $v_{j-1}$, $v_j$, $v_q$, $v_r$ and $v_p$.}
\end{center}
\end{figure}
The vertex $v_{j-1}$ is either type $A$ or type $C$, and the vertex $v_r$ is either type $B$ or type $C$. Therefore, $v_{j-1}\rightarrow v_j\rightarrow v_q\rightarrow v_r$ creates a shortcut. However, in Figures \ref{Case 4}(II) and \ref{Case 5}, it is shown that such a shortcut cannot exist if the vertices satisfy the mentioned conditions. Therefore, $v_j\rightarrow v_r$. 
Suppose $v_1$ is not adjacent to any of the vertices of $K_n$. Then, according to condition $6$, $v_{j-1}$ is a type $C$ vertex and there exists some type $A$ vertex $v'\in V(K_m)$, $v'\rightarrow v_1$. Let $N_{v'}\setminus V(K_m)=[x_1,y_1]$, $N_{v_{j-1}}\setminus V(K_m)=[1,x_{j-1}]\cup[y_{j-1},n]$. Suppose $v_r$ is a type $C$ vertex and $N_{v_r}\setminus V(K_m)=[1,x_r]\cup [y_r,n]$. But, according to the condition $4$, $1\leq v_j\leq x_r\leq y_1$ and as $y_{j-1}\leq v_j$, $y_{j-1}\leq y_1$. Therefore, according to the condition $6$, $v_1$ is not such a vertex which has no adjacent vertex in $V(K_n)$. Suppose $v_r$ is a type $B$ vertex and $N_{v_r}\setminus V(K_m)=[x_r,y_r]$. Then, according to condition $5$, $v_j\leq y_r\leq y_1$, and as $y_{j-1}\leq v_j$, $y_{j-1}\leq y_1$. Therefore, according to the condition $6$, $v_1$ is not such a vertex which has no adjacent vertex in $V(K_n)$.

Now, $v_1$ can be a type $A$ or type $C$ vertex. But, according to conditions $2$, $3$, $v_1$ should be neighbour with vertex $v_l\in V(K_n)$, $v_l<v_j$. Therefore, $v_{1}\rightarrow v_l\rightarrow v_j\rightarrow v_{j-1}$ creates a shortcut. But, in Figures \ref{Case 4}(II), \ref{Case 5}, it is shown that such a shortcut cannot exist if the vertices satisfy the mentioned conditions. Therefore, $v_1\rightarrow v_j$.

\textbf{Case 1.2:} If $\{v_{j-1},v_p\}\subseteq V(K_n)$, then the orientation occurs for the induced subgraph of the vertices $v_1$, $v_{j-1}$, $v_{j}$ and $v_p$ is shown in Figure \ref{fig9}.

\begin{figure}[H]
\begin{center}
\begin{tikzpicture}[node distance=1cm,auto,main node/.style={fill,circle,draw,inner sep=1pt,minimum size=5pt}]

\node [main node](1) {};
\node [right of=1,xshift=-5mm] {$v_{j-1}$};
\node [main node](2) [ below of=1] {};
\node [right of=2,xshift=-5mm] {$v_{j}$};
\node [main node](3) [ below of=2] {};
\node [right of=3,xshift=-5mm] {$v_{p}$};
\node [main node](4) [ left of=2] {};
\node [left of=4,xshift=5mm] {$v_1$};

\draw[->] (1) -- (2);
\draw[->] (2) -- (3);
\draw[->] (4) -- (1);
\draw[->] (4) -- (3);

\draw [->] (1) to [in=750, distance=1.5cm] (3);

\end{tikzpicture}
\caption{\label{fig9} Induced subgraph of the vertices $v_1$, $v_{j-1}$, $v_{j}$ and $v_p$.}
\end{center}
\end{figure}
We can see that $v_1$ is either type $A$ or $C$ vertex. But, according to the ordering of type $A$ or $C$, $N_{v_1}\setminus K_m=[x_{v_1},y_{v_1}]$ or $[1,x_{v_1}]\cup[y_{v_1},n]$. Therefore, $v_j\in [x_{v_1},y_{v_1}]$ or $[y_{v_1},n]$. So, $v_1$ and $v_j$ are adjacent. 

\textbf{Case 1.3:} If $v_{j-1}\in V(K_m), v_p\in V(K_n)$, then the following orientation occurs for the induced subgraph of the vertices $v_1$, $v_{j-1}$, $v_j$ and $v_p$.

\begin{figure}[H]
\begin{center}
\begin{tikzpicture}[node distance=1cm,auto,main node/.style={fill,circle,draw,inner sep=1pt,minimum size=5pt}]

\node [main node](1) {};
\node [right of=1,xshift=-5mm] {$v_{j}$};
\node [main node](2) [ below of=1] {};
\node [right of=2,xshift=-5mm] {$v_{p}$};
\node [main node](3) [ left of=1] {};
\node [left of=3,xshift=5mm] {$v_{1}$};
\node [main node](4) [ left of=2] {};
\node [left of=4,xshift=5mm] {$v_{j-1}$};

\draw[->] (1) -- (2);
\draw[->] (3) -- (4);
\draw[->] (3) -- (2);
\draw[->] (4) -- (1);

\end{tikzpicture}
\caption{\label{fig10} Induced subgraph of the vertices $v_1$, $v_{j-1}$, $v_{j}$ and $v_p$.}
\end{center}
\end{figure}
The vertices $v_1$ and $v_{j-1}$ can be type $A$ or $C$. But, according to conditions $2$, $3$ and $4$ this shortcut cannot occur as shown in Figures \ref{Case 2},\ref{Case 3a} and \ref{Case 4}(I). Therefore, $v_1$ and $v_j$ are adjacent. 
 
\textbf{Case 1.4:} If $v_{j-1}\in V(K_n), v_p\in V(K_m)$, then there exists some $v_q\in V(K_n)$ and $v_r\in V(K_m)$, $j\leq q\leq n$, $1\leq r \leq p$, such that the orientation of the edge between $v_q$ and $v_r$ is $v_q\rightarrow v_r$, otherwise, $v_p\notin V(K_m)$. The orientation occurs for the induced subgraph of the vertices $v_1$, $v_{j-1}$, $v_j$, $v_q$, $v_r$ and $v_p$ is shown in Figure \ref{fig11}.

\begin{figure}[H]
\begin{center}
\begin{tikzpicture}[node distance=1cm,auto,main node/.style={fill,circle,draw,inner sep=1pt,minimum size=5pt}]

\node [main node](1) {};
\node [right of=1,xshift=-5mm] {$v_{j-1}$};
\node [main node](2) [ below of=1] {};
\node [right of=2,xshift=-5mm] {$v_{j}$};
\node [main node](3) [ below of=2] {};
\node [right of=3,xshift=-5mm] {$v_{q}$};
\node [main node](4) [ left of=1] {};
\node [left of=4,xshift=5mm] {$v_1$};
\node [main node](5) [ left of=2] {};
\node [left of=5,xshift=5mm] {$v_r$};
\node[main node](6)[below of=5]{};
\node [left of=6,xshift=5mm] {$v_p$};
\draw[->] (1) -- (2);
\draw[->] (2) -- (3);
\draw[->] (4) -- (5);
\draw[->] (4) -- (1);
\draw[->] (3) -- (5);
\draw[->] (5) -- (6);

\draw [->] (1) to [in=750, distance=1.5cm] (3);
\draw [->] (4) to [in=750, distance=-1.5cm] (6);
\end{tikzpicture}
\caption{\label{fig11} Induced subgraph of the vertices $v_1$, $v_{j-1}$, $v_j$, $v_q$, $v_r$ and $v_p$.}
\end{center}
\end{figure}
In the figure, we can see there is a shortcut $v_1\rightarrow v_{j-1}\rightarrow v_q\rightarrow v_r$. The vertex $v_1$ can be either type $A$ or $C$, and the vertex $v_r$ can be either type $B$ or $C$. According to condition $2$, both $v_1$ and $v_r$ cannot be type $C$ because, $x_1\leq x_r$. According to conditions $4$(II) and $5$, this shortcut cannot occur as shown in Figures \ref{Case 4} and \ref{Case 5}. Therefore, $v_1\rightarrow v_q$.  However, the ordering of the neighbours of the vertex $v_1$ should be consecutive. Therefore, $v_1$ and $v_j$ are adjacent.

\textbf{Case 2:} Suppose $v_1\neq v_i$. Now, every $v_a$, $1\leq a<i$ is adjacent to every $v_b$, $1\leq b\leq p$ in the shortcut $P$. There exists at least one $v_a$ because, in Case 1, we proved that $v_1$ is adjacent to all the vertices present in the shortcut. Also, $v_i$ is adjacent to every $v_c$, $1\leq c<j$. So, we obtain the following shortcut in the shortcut $P$ among $v_{i-1}$, $v_i$, $v_{j-1}$ and $v_j$ vertices.
\begin{figure}[H]
\begin{center}
\begin{tikzpicture}[node distance=1cm,auto,main node/.style={circle,draw,inner sep=1pt,minimum size=5pt}]

\node (1) {$v_{i-1}$};
\node (2) [ right of=1] {$v_i$};
\node (3) [ right of=2] {$v_{j-1}$};
\node (4) [ right of=3] {$v_j$};

\draw[->] (1) -- (2);
\draw[->] (2) -- (3);
\draw[->] (3) -- (4);
\draw [->] (1) to [out=60, in=120] (4);
\draw [->] (1) to [out=60, in=120] (3);

\end{tikzpicture}

\caption{\label{fig12} Orientation among the vertices $v_{i-1}$, $v_i$, $v_{j-1}$ and $v_j$.}
\end{center}
\end{figure}

For this case, we can apply the same arguments as in Case 1 by treating the vertex $v_{i-1}$ as $v_1$ and the vertex $v_j$ as $v_p$ in Case 1. 
\end{proof}

\begin{theorem}
Let $G$ be a word-representable co-bipartite graph, and let $S$ be a semi-transitive orientation of $G$ containing vertices of types $A$, $B$, and $C$. Then $G$ admits another semi-transitive orientation in which every vertex is of either type $A$ or type $C$, and no vertex is of type $B$.
\end{theorem}

\begin{proof}
    We construct a new orientation by changing the type $B$ vertices into type $A$ vertices, and this orientation is also semi-transitive. Suppose $B=\{b_1,b_2,\ldots, b_l\}$ is the set of type $B$ vertices in the semi-transitive orientation $S$ such that $b_i<b_j$, $1\leq i<j\leq l$. We change the orientation of the edges as follows:
    \begin{itemize}
        \item If $u,v\in B$ or $u,v \notin B$, there is no change in the orientation $S$.
        \item If $u\in B$ and $v\notin B$, then the directed edge $v\rightarrow u$ is changed to  $u\rightarrow v$. 
    \end{itemize}

    From this construction, the type $B$ vertices become type $A$ vertices due to the change in the direction of edges connecting them to the opposite partition. We now show that this new orientation also satisfies the vertex ordering conditions mentioned in Theorem \ref{thmlab}. As there is no change in the type $A$ and type $C$ vertices, we only need to show that the new type $A$ vertices also satisfy the conditions of Theorem \ref{thmlab}. Suppose $a,b\in V(K_m)$ and $a\notin B$, $b\in B$. Since $b\rightarrow a$ is the new edge, it follows that $b<a$. It satisfies the first condition.
    
   Since there is no change in the orientation of edges between any two vertices $b_i, b_j \in B$ with $b_i < b_j$, let $N_{b_i} \setminus V(K_m) = [x_i, y_i]$ and $N_{b_j} \setminus V(K_m) = [x_j, y_j]$. Then $x_i \leq x_j$ and $y_i \leq y_j$. Now we need to show that the type $A$ vertices of the semi-transitive orientation $S$ also satisfy this condition with the new type $A$ vertices. Suppose $a_i$ is a type $A$ vertex in the orientation $S$, and $b_j$ is the type $B$ vertex in $S$, $N_{a_i}\setminus V(K_m)=[x_i,y_i]$, $N_{b_j}\setminus V(K_m)=[x_j,y_j]$. Then, $x_j\leq x_i$ and $y_j\leq y_i$. Now, in the new vertex ordering $b_j<a_i$, they satisfy the required condition for two type $A$ vertices, that is $x_j\leq x_i$ and $y_j\leq y_i$. Therefore, the new ordering satisfies the third condition.

   For the fourth condition, the vertices of types $A$ and $B$ also satisfy the same neighbourhood ordering condition. Therefore, changing the type $B$ vertices to type $A$ vertices also satisfies the fourth condition. Therefore, the new orientation is also semi-transitive.
\end{proof}

\section{Representation number}\label{sc4}
 In this section, we determine the maximum possible representation number of a co-bipartite graph. We show that this number is at most 3 by providing an algorithm that constructs a $3$-word representation for any word-representable co-bipartite graph. In this algorithm, we take the matrix $M_{m \times n}$ as input, described below.

$M_{m\times n}$ is a matrix representation of the co-bipartite graph $\overline{B}(K_m,K_n)$ where
\begin{itemize}
    \item the rows of $M_{m\times n}$ correspond to the vertices of $K_m$,
    \item the columns of $M_{m\times n}$ correspond to the vertices of $K_n$, and
    \item $M[i][j]=\begin{cases}
                    1, \text{ if } i\sim j\\
                    0, \text{ otherwise }
                \end{cases}$
\end{itemize}
In the matrix $M_{m\times n}$, we define two types of vertices, where $1\leq j\leq n$.
\begin{itemize}
    \item If $a$ is a type $A$ vertex then, it is represented as $[x_i,y_i]$ where 
    $M_{a\times j}=\begin{cases}
        1, \text{ if } x_i\leq j\leq y_i\\
        0, \text{ otherwise}
    \end{cases}$
    \item If $c$ is a type $C$ vertex then, it is represented as $[1,x_i]\cup [y_i,n]$ where
    $M_{c\times j}=\begin{cases}
        1, \text{ if } 1\leq j\leq x_i\\
        1, \text{ if } y_i\leq j\leq n \\
        0, \text{ otherwise}
    \end{cases}$
\end{itemize}
One key point to note is that any vertex $v$  with $ v \in V(K_m) $ and $ N_v \setminus V(K_m) = \varnothing $ is treated as a type $A$ vertex in the algorithm.

\begin{theorem}\label{repn}
    The representation number of word-representable co-bipartite graphs is at most $3$.
\end{theorem}   
\begin{proof}
    We provide an algorithm to generate a $3$-uniform word that represents the word-representable co-bipartite graph. The construction starts with three identical permutations of the vertices of $K_n$. Vertices of $K_m$ are inserted so that each vertex alternates exactly with its neighbourhood interval. Type $C$ vertices are inserted first to preserve the nesting of neighbourhoods, followed by type $A$ vertices inserted according to the ordering constraints of Theorem \ref{thmlab}. This guarantees a valid $3$-uniform word-representation.

\noindent\textbf{Algorithm: Word-representation of a word-representable co-bipartite graph}
\vspace{1mm}
\label{alg:my_algorithm}
\begin{algorithmic}[1]
 \State Input: $M_{m\times n}$ that satisfies the conditions for a word-representable co-bipartite graph.\\
\State Let $P$ be the permutation of the vertices of $K_n$ according to their transitive orientation.
\State $w=P_1P_2P_3$, where $P_1=P_2=P_3=P$.

\State Initialize empty dynamic arrays $A \gets [\;]$ and $C \gets [\;]$.

\For{each vertex $v \in V(K_m)$ in vertex ordering}
    \If{$N_v\setminus V(K_m)=[x,y]$}
        \State append $v$ to $A$
    \ElsIf{$N_v\setminus V(K_m)=[1,x]\cup[y,n]$}
        \State append $v$ to $C$
    \EndIf
\EndFor

\For{$i=1$ to length$(C)$}
    \State Let $c_i=C[i]$ and $N_{c_i}\setminus V(K_m)=[1,x_i]\cup[y_i,n]$
    \If{$i>1$ and $N_{c_{i-1}}\setminus V(K_m)=[1,x_i]\cup [y_{i-1},n]$} 
        \State replace $x_ic_{i-1}$ with $x_ic_{i-1}c_i$ in $P_1$ 
    \Else
        \State replace $x_i$ with $x_ic_i$ in $P_1$.
    \EndIf
    \State replace $x_i$ with $x_ic_i$ in $P_2$.
    \State replace $y_i$ with $c_iy_i$ in $P_3$.
\EndFor
\For{$i=1$ to length$(A)$}
    \State Let $a_i=A[i]$ and $N_{a_i}\setminus V(K_m)=[x_i,y_i]$
    \State Let $c_j=C[1]$ and $N_{c_j}\setminus V(K_m)=[1,x_j]\cup[y_j,n]$
    \If{$i>1$ and $N_{a_{i-1}}\setminus V(K_m)=[x_{i-1},y_i]$}
        \State replace $y_ia_{i-1}$ with $y_ia_{i-1}a_i$ in $P_1$.
    \Else
        \State replace $y_i$ with $y_ia_i$ in $P_1$.
    \EndIf
    \If{$x_i>x_j$}
        \State replace $x_jc_j$ with $x_j$ in $P_1$.
        \State replace $x_i$ with $a_ic_jx_i$ in $P_1$.
        \State replace $y_i$ with $y_ia_i$ in $P_2$.
    \Else
        \State Suppose $a_{i-1}=A[i-1]$ and $N_{a_{i-1}}\setminus V(K_m)=[x_{i-1},y_{i-1}]$.
        \If{$y_{i-1}< x_i$}
            \State replace $y_{i-1}a_{i-1}$ with $y_{i-1}$ in $P_1$.
            \State replace $x_i$ with $a_ia_{i-1}x_i$ in $P_1$.
            \State replace $y_i$ with $y_ia_i$ in $P_2$.
        \Else
            \State replace $x_i$ with $a_ix_i$ in $P_1$.
            \State replace $y_i$ with $y_ia_i$ in $P_2$.
        \EndIf
    \EndIf
\EndFor
\State Return $w$
\end{algorithmic}
Our claim is that the word obtained from the algorithm is a word-representation of the word-representable co-bipartite graph.\\
\textbf{Proof of Correctness:}\\
First, we check whether the edges of the co-bipartite graph are represented by the alternation in the word-representation or not.
\begin{itemize}
    \item The vertices of $K_n$ are adjacent to one another. The word $w_{V(K_n)}=P_1P_2P_3=P^3$. Therefore, the vertices of $K_n$ are alternating with one another in the word $w$.
    \item Suppose $a_i$ is a type $A$ vertex of $K_m$, and $N_{a_i}\setminus V(K_m)=[x_i,y_i]$. For a type $A$ vertex, according to the first if condition, $a_i$ occurs after the first $y_i$. Now, in the second condition, $a_i$ occurs before the first $x_i$ and after the second $y_i$. Therefore, the word $w_{\{a_i,x_i,\ldots,y_i\}}=a_ix_i\cdots y_ia_ix_i\cdots y_ia_ix_i\cdots y_i$. Therefore, $a_i$ is alternating with the vertices of $N_{a_i}\setminus V(K_m)$.
    \item Suppose $c_i$ is a type $C$ vertex of $K_m$, and $N_{c_i}\setminus V(K_m)=[1,x_i]\cup[y_i,n]$. For a type $C$ vertex, according to the first if condition, $c_i$ occurs after the first $x_i$. Then, $c_i$ occurs after the second $x_i$ and before the third $y_i$. Therefore, the word $w_{\{c_i,1,\ldots,x_i,y_i,\ldots ,n\}}=1\cdots x_ic_iy_i\cdots n1\cdots x_ic_iy_i\cdots n1\cdots x_ic_iy_i$ $\cdots n$.  Therefore, $c_i$ is alternating with the vertices of $N_{c_i}\setminus V(K_m)$.
    \item Suppose $a_1,\ldots,a_l$ are the type $A$ vertices and $c_1,\ldots,c_p$ are the type $C$ vertices of $K_m$. Therefore, each pair of such vertices should alternate with each other in the word $w$. We consider three cases to check the alternation in the following.
    \\\textbf{Case 1:} First, we check whether any two arbitrary type $A$ vertices are alternating with one another or not. Suppose $a_i$ and $a_j$ are two type $A$ vertices,$a_i<a_j$ and $N_{a_i}\setminus V(K_m)=[x_i,y_i]$, $N_{a_j}\setminus V(K_m)=[x_j,y_j]$. According to the third condition of Theorem \ref{thmlab}, $x_i\leq x_j$ and $y_i\leq y_j$. For this, two conditions occur that are described below.\\
    \textbf{Case 1.1:} If $y_i\geq x_j$, then $x_i\leq x_j\leq y_i\leq y_j$. Then the word $w_{\{x_i,x_j,y_i,y_j\}}=x_ix_jy_iy_j x_ix_jy_iy_j x_ix_jy_iy_j$. Now, the word $w_{\{a_i,a_jx_i,x_j,y_i,y_j\}}=a_ix_ia_jx_jy_ia_iy_j$ $a_j x_ix_jy_ia_iy_ja_j x_ix_jy_iy_j$. Therefore, $a_i$ and $a_j$ alternate with each other in this word.\\
    \textbf{Case 1.2:} If $y_i<x_j$, then $x_i\leq y_i<x_j\leq y_j$. Then the word $w_{\{x_i,x_j,y_i,y_j\}}=x_iy_ix_jy_j  x_iy_ix_jy_j  x_iy_ix_jy_j$. In this case, we remove the second occurrence of $a_i$ and place it after the first occurrence of $a_j$. Therefore, $w_{\{a_i,a_jx_i,x_j,y_i,y_j\}}=a_ix_iy_ia_ja_ix_jy_ja_j  x_iy_ia_ix_jy_ja_j  x_iy_ix_jy_j$. Therefore, $a_i$ and $a_j$ alternate with each other in this word.\\
    \textbf{Case 2:} We check whether any two arbitrary type $C$ vertices are alternating with one another or not. Suppose $c_i$ and $c_j$ are two type $C$ vertices,$c_i<c_j$ and $N_{c_i}\setminus V(K_m)=[1,x_i]\cup[y_i,n]$, $N_{c_j}\setminus V(K_m)=[1,x_j]\cup[y_j,n]$. According to the second condition of Theorem \ref{thmlab}, $x_i\leq x_j$ and $y_i\leq y_j$. We can see that $w_{\{x_i,y_i,x_j,y_j\}}= x_ix_jy_iy_j x_ix_jy_iy_j x_ix_jy_iy_j$ or $x_iy_ix_jy_j x_iy_ix_jy_j x_iy_ix_jy_j$. In both of the cases, the word $w_{\{c_i,c_j, x_i,y_i,x_j,y_j\}}=x_ic_ix_jc_jy_iy_j x_ic_ix_jc_jy_iy_j x_ix_j$ $c_iy_ic_jy_j$ or $x_ic_iy_ix_jc_jy_j  x_ic_iy_ix_jc_jy_j $ $x_ic_iy_ix_jc_jy_j$. Therefore, $c_i$ and $c_j$ alternate with each other in this word.

    \textbf{Case 3:} We check whether any two arbitrary type $A$ and $C$ vertices are alternating with one another or not. Suppose $a_i$ and $c_j$ are type $A$ and $C$ vertices respectively, and $N_{a_i}\setminus V(K_m)=[x_i,y_i]$, $N_{c_j}\setminus V(K_m)=[1,x_j]\cup[y_j,n]$. According to the fourth condition of Theorem \ref{thmlab}, $x_i\leq y_j$ and $x_j\leq y_i$. For this, two conditions occur that are described below.\\
    \textbf{Case 3.1:} If $x_i\leq x_j$, then $x_i\leq x_j\leq y_i\leq y_j$ or  $x_i\leq x_j\leq y_j\leq y_i$. Then the word $w_{\{x_i,x_j,y_i,y_j\}}=x_ix_jy_iy_j x_ix_jy_iy_j x_ix_jy_iy_j$ or $x_ix_jy_jy_i x_ix_jy_jy_i x_ix_jy_jy_i$. In both of the cases, the word $w_{\{a_i,c_j, x_i,y_i,x_j,y_j\}}=a_ix_ix_jc_jy_ia_iy_j x_ix_jc_jy_ia_iy_j$ $ x_ix_jy_ic_jy_j$ or $a_ix_ix_jc_jy_jy_ia_i x_ix_jc_jy_jy_ia_i x_ix_jc_jy_jy_i$. Therefore, $a_i$ and $c_j$ alternate with each other in this word.\\
    \textbf{Case 3.2:} If $x_i>x_j$, then $x_j<x_i\leq y_i\leq y_j$ or $x_j<x_i\leq y_j\leq y_i$. Then the word $w_{\{x_i,x_j,y_i,y_j\}}=x_jx_iy_iy_j x_jx_iy_iy_j x_jx_iy_iy_j$ or $x_jx_iy_jy_i x_jx_iy_jy_i x_jx_iy_jy_i$. In this case, we remove the first occurrence of $c_j$ and place it after the first occurrence of $a_i$. In both of the cases, the word 
    $w_{\{a_i,c_j, x_i,y_i,x_j,y_j\}}= x_ja_ic_jx_iy_ia_iy_j c_jx_jx_iy_ia_iy_j x_jx_iy_ic_jy_j$ or $x_ja_ic_jx_iy_jy_ia_i c_jx_jx_iy_jy_ia_i x_jx_ic_jy_jy_i$. Therefore, $a_i$ and $c_j$ alternate with each other in this word.
        
\end{itemize}

We now verify that the non-edges of the graph are represented by the non-alternation in the word-representation or not.
\begin{itemize}
    \item Suppose $a_i$ is a type $A$ vertex of $K_m$, and $N_{a_i}\setminus V(K_m)=[x_i,y_i]$. Then, the vertices $1,\ldots,x_{i-1},y_{i+1},$ $\ldots,n$ should not alternate with $a_i$. We already know that $w_{\{a_i,x_i,\ldots,y_i\}}=a_ix_i\cdots y_ia_ix_i\cdots y_ia_ix_i\cdots y_i$. There are no other letters between the first $a_i$ and the first $x_i$. Then, $w_{V(K_n)\cup\{a_i\}}=1\cdots x_{i-1}a_ix_i\cdots $ $y_ia_iy_{i+1}\ldots n 1\cdots x_{i-1}x_i\cdots y_ia_i y_{i+1}\ldots n 1\cdots x_{i-1}x_i\cdots y_iy_{i+1}$ $\cdots n$. Therefore, $1\cdots x_{i-1} a_i a_i $ $y_{i+1}\ldots n $ shows the non-alternation of the vertices $1,\ldots,x_{i-1},y_{i+1},\ldots,n$ with $a_i$. Therefore, $a_i$ is not adjacent to $1,\ldots,x_{i-1},y_{i+1},\ldots,n$.

    \item Suppose $c_i$ is a type $C$ vertex of $K_m$, and $N_{c_i}\setminus V(K_m)=[1,x_i]\cup[y_i,n]$. Then the vertices, $x_{i+1,\ldots,y_{i-1}}$ should not alternate with $c_i$. We already know that  $w_{\{c_i,1,\ldots,x_i,y_i,\ldots ,n\}}=1\cdots x_ic_iy_i\cdots n$ $1\cdots x_ic_iy_i\cdots n$ $1\cdots x_ic_iy_i\cdots n$. There exist two cases for the first occurrence of the $c_i$ in the word that are described below.
    
    \textbf{Case 1:} There is no other letters between the first $x_i$ and the first $c_i$. Then $w_{V(K_n)\cup \{c_i\}}=1\cdots x_ic_ix_{i+1}$ $\cdots y_{i-1} y_i\cdots n  1\cdots x_ic_ix_{i+1}\cdots y_{i-1}y_i\cdots n  1\cdots $ $x_ix_{i+1}\cdots y_{i-1}c_iy_i\cdots n$. Therefore, $c_ix_{i+1}\cdots$ $ y_{i-1}x_{i+1}\cdots y_{i-1}c_i$ shows the non-alternation of the vertices $x_{i+1}\cdots y_{i-1}$ with $c_i$. Therefore, $c_i$ is not adjacent to $x_{i+1}\cdots y_{i-1}$.
    
    \textbf{Case 2:} Suppose the first occurrence of the $c_i$ is after $x_j\in V(K_n)$, $x_i<x_j$. Then $w_{V(K_n)\cup \{c_i\}}=1\cdots x_ix_{i+1}\cdots x_jc_i\cdots y_{i-1} y_i\cdots n  1\cdots x_ic_ix_{i+1}\cdots y_{i-1}y_i$ $\cdots n  1\cdots x_ix_{i+1}\cdots y_{i-1}c_iy_i\cdots n$, because the second and third occurrence of $c_i$ is after second $x_i$ and before third $y_i$, respectively. In this case also, $c_ix_{i+1}\cdots y_{i-1}x_{i+1}\cdots y_{i-1}c_i$ shows the non-alternation of the vertices $x_{i+1}\cdots y_{i-1}$ with $c_i$. Therefore, $c_i$ is not adjacent to $x_{i+1}\cdots y_{i-1}$.
\end{itemize}
Therefore, the word obtained from the algorithm is a word-representation of the word-representable co-bipartite graph.
\end{proof}
It has been shown that permutation graphs are the only circle co-bipartite graphs. 
\begin{theorem}(\textit{\cite{srinivasan5128202bipartite}})\label{comp}
     If a bipartite graph G is the complement of a circle graph, then both G and its complement are permutation graphs.
\end{theorem}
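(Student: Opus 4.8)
The plan is to reduce the statement to two classical structural characterizations of permutation graphs and then isolate the single nontrivial ingredient. Write $H=\overline{G}$, so that by hypothesis $G$ is bipartite and $H$ is a circle graph; note that since $\overline{H}=G$ is bipartite, $H$ is co-bipartite. First I would recall that the class of permutation graphs is closed under complementation: reversing one of the two linear orders of a permutation diagram turns a model of $G$ into a model of $\overline{G}$. Hence it suffices to prove that $G$ alone is a permutation graph, and the conclusion for $\overline{G}=H$ follows for free.

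Next I would invoke the Pnueli--Lempel--Even theorem (equivalently the Dushnik--Miller dimension-$2$ characterization): a graph is a permutation graph if and only if both it and its complement are comparability graphs. For $G$ one of the two required transitive orientations is immediate: fixing a bipartition $(X,Y)$ and orienting every edge from $X$ to $Y$ gives an acyclic orientation with no directed path of length two, hence a transitive one, so $G$ is a comparability graph. It therefore remains to show that $H=\overline{G}$ is a comparability graph, and this single step carries the entire weight of the theorem.

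For that step I would work from a chord (equivalently interval-overlap) model of the circle graph $H$, where vertices are chords of a circle and $uv\in E(H)$ exactly when the two chords cross. Because $H$ is co-bipartite, its vertex set splits into two cliques $C_1,C_2$, and in the model each $C_i$ is a family of pairwise-crossing chords; such a family can be normalized so that its $2|C_i|$ endpoints appear around the circle in the pattern $a_1\cdots a_{|C_i|}\,b_1\cdots b_{|C_i|}$, i.e.\ as a single bundle. The aim is to convert the two-bundle diagram into a transitive orientation of $H$ (equivalently, into a genuine permutation diagram), orienting each crossing by the left-to-right order of endpoints and using the two bundles to keep that order globally consistent. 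The hard part will be verifying transitivity across the two bundles, since a general circle graph is not a comparability graph: the argument must use co-bipartiteness essentially. Concretely, I expect to run a case analysis on how a would-be intransitive triple $u\to v\to w$ with $uw\notin E(H)$ distributes among $C_1$ and $C_2$, and to show that each violating configuration would produce a closed forcing chain of odd length, which is impossible because such a chain corresponds to an odd cycle in the bipartite complement $G$. Establishing that this reduction to the parity of $G$ rules out every intransitive triple is the main obstacle; once it is done, $H$ is a comparability graph, and the characterizations above yield that $G$ and $\overline{G}$ are both permutation graphs.
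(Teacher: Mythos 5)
The paper does not prove this statement at all: Theorem~\ref{comp} is imported verbatim from the cited reference \cite{srinivasan5128202bipartite} and is used as a black box, so there is no in-paper argument to compare yours against. Judged on its own, your proposal sets up the right reduction but does not close it. The framing is sound: permutation graphs are closed under complementation, the Pnueli--Lempel--Even/Dushnik--Miller characterization reduces the claim to showing that both $G$ and $H=\overline{G}$ are comparability graphs, and the bipartite side is immediate by orienting all edges from $X$ to $Y$. After these (correct) reductions, the entire content of the theorem is the single assertion that a co-bipartite circle graph is a comparability graph, and that is exactly the step you do not prove.

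Your treatment of that step is a plan, not an argument. You normalize each clique of $H$ into a bundle of pairwise-crossing chords (this normalization is fine), propose to orient crossings by left-to-right endpoint order, and then say you ``expect'' a case analysis in which every intransitive triple yields an odd closed forcing chain, hence an odd cycle in $G$. None of this is carried out: you do not define the orientation precisely enough to check acyclicity, you do not exhibit the case analysis, and the claimed correspondence between intransitive triples and odd cycles in the bipartite complement is asserted rather than established (it gestures at Gallai-type forcing/implication-class arguments, which require genuine work to instantiate and must also use the chord model in an essential way, since general circle graphs are not comparability graphs). Since you yourself flag this as ``the main obstacle'' and leave it open, the proposal as written does not constitute a proof of the theorem; it is a correct reduction plus an unproven core lemma. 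To repair it, either carry out the forcing-chain analysis in full for the two-bundle chord diagram, or simply cite the known result (as the paper does) that bipartite complements of circle graphs are permutation graphs.
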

According to Theorem \ref{2unf}, the representation number of circle graphs is $2$. Based on this, we obtain the following result.
\begin{cor}
   If the word-representable co-bipartite graph $G$ is not a permutation graph, then the representation number of the graph is $3$. 
\end{cor}
  
\begin{proof}
   According to Theorem \ref{comp}, a co-bipartite graph $G$ is a circle graph if and only if $G$ is a permutation graph. According to Theorem \ref{2unf}, only circle graphs have a representation number equal to $2$. Furthermore, according to Theorem \ref{repn}, we know that the representation number of any word-representable co-bipartite graph is at most $3$. Therefore, if $G$ is a word-representable co-bipartite graph but not a permutation graph, its representation number is $3$.
\end{proof}
After determining the representation number, we now study the number of labelled word-representable co-bipartite graphs on $n$ vertices. In the next subsection, we determine the speed and entropy of this class.
\section{Speed and entropy}\label{sc5}

In this section, we study the asymptotic enumeration of word-representable co-bipartite graphs using the notions of speed and entropy.

\begin{dnt}
Let $\mathcal{X}$ be a class of labelled graphs and let $\mathcal{X}_n$ denote the number of graphs in $\mathcal{X}$ on $n$ vertices. The function $\mathcal{X}_n$, viewed as a function of $n$, is called the \emph{speed} of the class $\mathcal{X}$.
\end{dnt}

\begin{dnt}
Let $\mathcal{X}$ be a class of labelled graphs with speed $\mathcal{X}_n$. The \emph{entropy} of the class $\mathcal{X}$ is defined as
\[
\lim_{n \to \infty} \frac{\log_2 \mathcal{X}_n}{\binom{n}{2}}= 1 - \frac{1}{k(X)}
\]
, where $k(X)$ is called the \emph{index} of the class $X$. The index is defined in terms of the graph classes ${\mathcal E}_{i,j}$, which consist of graphs whose vertex set can be partitioned into at most $i$ independent sets and $j$ cliques. The index $k(X)$ is the maximum integer $k$ such that $X$ contains the class ${\mathcal E}_{i,j}$ for some integers $i$ and $j$ satisfying $i + j = k$.
\end{dnt}

The speed and entropy measure the growth rate and structural complexity of a graph class. For example, the class of all graphs has speed $2^{\binom{n}{2}} = 2^{\Theta(n^2)}$ and entropy $1$. The class of co-bipartite graphs also has speed $2^{\Theta(n^2)}$ and entropy $\frac{1}{2}$, since co-bipartite graphs belong to the class $\mathcal{E}_{0,2}$ in the Alekseev–Bollobás–Thomason theorem. Similarly, it was shown in \cite{collins2017new} that the entropy of word-representable graphs is $\frac{2}{3}$. By the Alekseev--Bollobás--Thomason theorem \cite{alekseev1992range,bollobas1995projections}, it follows that the speed of word-representable graphs is $2^{\Theta(n^2)}$. 

We now show that the class of word-representable co-bipartite graphs grows much slower than both the class of all co-bipartite graphs and the class of all word-representable graphs.

\begin{theorem}
Let $\mathcal{W}$ be the class of word-representable co-bipartite graphs. Then the number $\mathcal{W}_n$ of labelled graphs in $\mathcal{W}$ on $n$ vertices satisfies
\[
\mathcal{W}_n = 2^{O(n\log n)}.
\]
In particular, the entropy of the class $\mathcal{W}$ is $0$.
\end{theorem}

\begin{proof}

Let $G \in \mathcal{W}$ be a word-representable co-bipartite graph on $n$ vertices. Then the vertex set of $G$ can be partitioned into two cliques $K_a$ and $K_b$ such that $a + b = n$ and $0 \le b \le a$.
Since all edges between vertices within each clique are present by definition, these edges are fixed and do not contribute to the counting. Therefore, it suffices to count the possible edge sets between the two cliques.

Fix an ordering of the vertices of $K_b$ as $V(K_b) = \{1,2,\dots,b\}$. By the vertex ordering characterisation, for each vertex $v \in K_a$, the neighbourhood of $v$ in $K_b$ is either of the form
\[
N(v)\setminus K_a = [x,y]
\quad \text{or} \quad
N(v)\setminus K_a = [1,x] \cup [y,b],
\]
for some integers $1 \le x \le y \le b$.

We first count the number of possible interval subsets. An interval $[x,y]$ is uniquely determined by choosing its left endpoint $x$ and right endpoint $y$ with $1 \le x \le y \le b$. For a fixed $x$, there are exactly $(b-x+1)$ choices for $y$. Therefore, the total number of interval subsets is
\[
\sum_{x=1}^{b} (b-x+1)
=
\frac{b(b+1)}{2}
=
O(b^2).
\]

Similarly, a circular interval subset of the form $[1,x] \cup [y,b]$ is uniquely determined by the pair $(x,y)$ satisfying $1 \le x \le y \le b$. Since there are at most $b^2$ such pairs, the number of circular interval subsets is also $O(b^2)$.

Thus, each vertex in $K_a$ has at most $c b^2 \le c n^2$ possible neighbourhoods, for some constant $c$.

Since the neighbourhood of each vertex in $K_a$ is determined by at most $O(b^2)$ possible intervals satisfying the vertex ordering conditions, the total number of possible neighbourhood assignments is at most $(c b^2)^a$. Therefore, for a fixed partition into cliques of sizes $a$ and $b$, the number of possible graphs is at most $(c b^2)^a$. For each choice of $a$, there are $\binom{n}{a}$ ways to select the vertices of the clique $K_a$ from the $n$ vertices. Thus, 
\[
\mathcal{W}_n
\le \sum_{a=0}^{n} \binom{n}{a} (c b^2)^a
\le \sum_{a=0}^{n} \binom{n}{a} (c n^2)^a
=
(1 + c n^2)^n.
\]

Taking logarithms,
\[
\log_2 \mathcal{W}_n
\le
n \log_2(1 + c n^2)
=
O(n \log n).
\]

Hence,
\[
\mathcal{W}_n
=
2^{O(n \log n)}.
\]

We now calculate the entropy of the class $\mathcal{W}$. First, we determine the index of the class $\mathcal{W}$. Since $\mathcal{W}$ contains all complete graphs, it contains the class ${\mathcal E}_{0,1}$, and hence $k(\mathcal{W}) \ge 1$. However, $\mathcal{W}$ does not contain the class ${\mathcal E}_{0,2}$, since not all co-bipartite graphs are word-representable. Therefore, $k(\mathcal{W}) = 1$.

Substituting $k(\mathcal{W}) = 1$ into the entropy formula
\[
\lim_{n \to \infty}
\frac{\log_2 \mathcal{W}_n}{\binom{n}{2}}
=
1 - \frac{1}{k(\mathcal{W})},
\]
we obtain
\[
\lim_{n \to \infty}
\frac{\log_2 \mathcal{W}_n}{\binom{n}{2}}
=
0.
\]
\end{proof}

\section{Conclusion}
In this paper, we study word-representable co-bipartite graphs using vertex orderings. We obtain necessary and sufficient conditions for a co-bipartite graph to be word-representable in terms of vertex orderings. Using this characterisation, we show that every word-representable co-bipartite graph has representation number at most $3$. We also determine the speed and entropy of the class of word-representable co-bipartite graphs. As future work, this ordering-based characterisation can be used to design a polynomial-time algorithm for recognising word-representable co-bipartite graphs. This vertex ordering characterisation can also be used to analyse graph parameters of this class.

\end{document}